
\documentclass[final]{siamltex}



\title{An abstract criterion on the existence and global stability of stationary solutions for random dynamical systems and its applications\thanks{This work was partially supported by the National Natural Science Foundation of China (NSFC) under
Grants No.12171321, No.11971316, No.11771295, No.11501369 and No.11371252; the NSF of Shanghai Grants under No.25ZR1401278, No.19ZR1437100 and No.20JC1413800; Chen Guang Project (14CG43) of Shanghai Municipal Education Commission, Shanghai Education Development Foundation; Yangfan Program of Shanghai (14YF1409100) and Shanghai Gaofeng Project for University Academic Program Development.}}


\author{Xiang Lv\thanks{Corresponding author. Department of Mathematics, Shanghai Normal
University, Shanghai 200234, PR China ({\tt lvxiang@shnu.edu.cn}).}}

\usepackage[leqno]{amsmath}
\usepackage{amsmath,amssymb,amsfonts}
\usepackage{color,xcolor}
\usepackage{graphicx}
\usepackage{manfnt}
\usepackage[all]{xy}
\usepackage{enumerate}
\usepackage{mathrsfs}

\begin{document}

\maketitle

\begin{abstract}
We prove a concise and easily verifiable criterion on the existence and global stability of stationary solutions for random dynamical systems (RDSs). As a consequence, we can show that the $\omega$-limit sets of all pullback trajectories of semilnear/nonlinear stochastic differential equations (SDEs) with additive/multiplicative white noise are composed of nontrivial random equilibria. The proof is different from the classical RDS scheme, which was established in \cite{CKS}. Furthermore, in the applications of stability analysis for SDEs, our conditions are not only sufficient but indeed sharp.
\end{abstract}

\begin{keywords}
random dynamical systems, stochastic differential equations, stationary solutions, Birkhoff-Khintchin ergodic theorem, comparison principles
\end{keywords}

\begin{AMS}
37H05, 60H10, 60G10, 37A30, 34C10
\end{AMS}

\pagestyle{myheadings}
\thispagestyle{plain}

\section{Introduction}
Due to practical applications, one of the important problems in the study of long-term behaviour for stochastic differential equations (SDEs) is to consider various types of stochastic stability, including exponential stability in mean square, globally asymptotical stability in probability, and almost sure stability, which has been extensively and intensively investigated during past decades, see \cite{H,Ic1,Kha,Ko,Ku,Liu,LM,M,M2}. However, the existing literature primarily focuses on the stability analysis of trivial or constant stationary solutions, both in finite-dimensional systems and infinite-dimensional frameworks. The main purpose of this paper is to establish the existence and global stability of nontrivial stationary solutions for SDEs by applying an abstract fixed point theorem for random dynamical systems (RDSs).

To be specific, in the finite-dimensional setting, Kozin \cite{Ko} established fundamental results for linear stochastic systems, laying a rigorous foundation for subsequent research.  Around the same period, Kushner \cite{Ku} developed the Lyapunov function theory for strong Markov processes with applications to control problems. Building on these works, Kha'sminskii \cite{Kha} made a comprehensive work on the stability theory for solutions of It\^{o} SDEs.

The investigation of stability for infinite-dimensional SDEs can go back by Haussmann \cite{H} for linear systems and Ichikawa \cite{Ic1} for semilinear systems. These pioneering studies inspired significant follow-up work, including contributions by Caraballo and Real \cite{CR},  Leha, Ritter and Maslowski \cite{LRM}, Liu \cite{Liu} and Liu and Mao \cite{LM}. The primary tool employed in the research of stability for constant stationary solutions is the Lyapunov functional method, whose main challenge lies in constructing appropriate Lyapunov functions.

In contrast to constant stationary solutions, we are mainly interested in the asymptotical stability of nontrivial stationary solutions. This issue has attracted limited attention in previous studies, such as \cite{CGS,CKS,JL1,JL2,JL3}. For this problem, the Lyapunov functional method proves inadequate, whereas techniques from RDSs can offer an effective solution approach. In fact, we can obtain much more information if the stochastic equation can induce an RDS, and then stationary solutions represent random attractors consisting of a single point, see \cite{CF,CF2,FGS}. Using this viewpoint, under the assumptions of commutativity on drift and diffusion coefficients, Caraballo, Kloeden and Schmalfu{\ss} \cite{CKS} proved the existence of a unique stationary solution for semilinear stochastic evolution equations. In our recent works \cite{JL1,JL2}, we have demonstrated that the stochastic flow generated by SDEs admits a globally attracting random equilibrium, under the hypothesis that the nonlinear drift function is bounded and satisfies either monotonicity or anti-monotonicity conditions.

In this paper, we shall first establish a concise and easily verifiable criterion (see Theorem \ref{thm1}) for guaranteeing the globally attracting random equilibrium of RDSs. In the applications, we only consider the global stability of  nontrivial stationary solutions for semilnear or nonlinear SDEs with additive or multiplicative white noise, respectively. Departing from the approaches in \cite{CKS,JL1,JL2}, we introduce a completely different technical route and significantly weaken the required assumptions. Actually, the methodology given here can also be extended to consider stochastic functional differential equations (SFDEs), stochastic partial differential equations (SPDEs) and related problems.

The rest of this paper is organized as follows. In Section 2, we introduce some basic notations and present an abstract criterion on the existence and global stability of random equilibria for random dynamical systems, see Theorem \ref{thm1}. In Section 3, we demonstrate the broad applicability of Theorem \ref{thm1} for the global stability of SDEs. In Section 4, we provide concluding remarks and give several open problems for future research.

\section{An abstract criterion}
In this section, we will prove an abstract criterion on the existence and global stability of random equilibria for RDSs. To do this, we need to introduce some basic definitions of RDSs, and then show our main results at the end of this section. The reader is referred to \cite{A,Chu} for more details. Let $(X,d)$ be a complete separable metric space (i.e., Polish space) equipped with the Borel $\sigma$-algebra $\mathscr{B}(X)$ and $(\Omega,\mathscr{F},\mathbb{P})$ be a probability space.

\begin{definition}
$\theta\equiv\bigl(\Omega,\mathscr{F},\mathbb{P},\{\theta_t,t\in\mathbb{R}\}\bigr)$ is called a metric dynamical system (MDS) if
\[\theta:\mathbb{R}\times\Omega\mapsto\Omega,\qquad \theta_0={\rm id},\qquad \theta_{t_2}\circ\theta_{t_1}=\theta_{t_1+t_2}\]
for all $t_1,t_2\in\mathbb{R}$, which is $\bigl(\mathscr{B}(\mathbb{R})\otimes\mathscr{F}, \mathscr{F}\bigr)$-measurable. In addition, we assume that $\theta_t\mathbb{P}=\mathbb{P}$ for all $t\in\mathbb{R}$.
\end{definition}

\begin{definition}\label{DRDS}
An RDS on the state space $X$ with an MDS $\theta$ is a mapping
\[\varphi:\mathbb{R}_+\times\Omega\times X\mapsto X, \quad (t,\omega,x)\mapsto\varphi(t,\omega,x),\]
which is $\bigl(\mathscr{B}(\mathbb{R}_+)\otimes\mathscr{F}\otimes\mathscr{B}(X),
\mathscr{B}(X)\bigr)$-measurable such that for all $\omega\in\Omega$,
\begin{enumerate}[{\rm(i)}]
\item $\varphi(0,\omega,\cdot)$ is the identity on $X$;
\item $\varphi(t_1+t_2,\omega,x)=\varphi\bigl(t_2,\theta_{t_1}\omega,\varphi(t_1,\omega,x)\bigr)$ for all $t_1,t_2\in\mathbb{R}_+$ and $x\in X$;
\item $\varphi(t,\omega,\cdot):X\to X$ is continuous for all $t\in
\mathbb{R}_+$.
\end{enumerate}
\end{definition}

\begin{definition}\label{Tem}
A random variable $R(\omega)$ is said to be tempered with respect to the MDS $\theta\equiv\bigl(\Omega,\mathscr{F},\mathbb{P},\{\theta_t,t\in\mathbb{R}\}\bigr)$ if
\[\sup_{t\geq0}\bigl\{e^{-\gamma t}|R(\theta_{-t}\omega)|\bigr\}<\infty\quad for\ \ all\ \ \omega\in\Omega\ \ and\ \ \gamma>0.\]
Furthermore, a random variable $R(\omega)$ is said to be $\gamma_0$-tempered (weakly tempered) with respect to the MDS $\theta\equiv\bigl(\Omega,\mathscr{F},\mathbb{P},\{\theta_t,t\in\mathbb{R}\}\bigr)$ if
\[\sup_{t\geq0}\bigl\{e^{-\gamma t}|R(\theta_{-t}\omega)|\bigr\}<\infty\quad for\ \ all\ \ \omega\in\Omega\ \ and\ \ \gamma\geq\gamma_0.\]
\end{definition}

\textsc{{\it Remark}} 1.
By Definition \ref{Tem}, a tempered random variable $R(\omega)$ must be $\gamma_0$-tempered for all $\gamma_0>0$. However, the inverse may not be true.

\begin{definition}\label{Equili}
A random variable $u:\Omega\rightarrow X$ is said to be a random equilibrium (or stationary solution) of the RDS $(\theta,\varphi)$ if it is invariant with respect to  $(\theta,\varphi)$, i.e.
\[\varphi(t,\omega)u(\omega)=u(\theta_t\omega)\quad for\ \ all\ \ t\geq0\ \ and\ \ \omega\in\Omega.\]
\end{definition}

In what follows, we can state our main results.
\begin{theorem}\label{thm1} Let $(\theta,\varphi)$ be an RDS with the state space $X$.
Assume that $(\theta,\varphi)$ satisfies the following conditions:
\begin{enumerate}[{\rm(H1)}]
\item There exist a real number $\lambda>0$ and a random variable $\widehat{R}(\omega)$ such that for any $x,y\in X$,
\begin{equation}\label{H1}
d\bigl(\varphi(t,\theta_{-t}\omega,x),\varphi(t,\theta_{-t}\omega,y)\bigr)\leq\widehat{R}(\omega)e^{-\lambda t}d\bigl(x,y\bigr),\ t\geq t_0,\ \omega\in\Omega,
\end{equation}
where $t_0\geq0$ is a positive constant;
\end{enumerate}

\begin{enumerate}[{\rm(H2)}]
\item For any $x\in X$, there exists a $\lambda_0$-tempered random variable $R_x(\omega)$ such that
\begin{equation}\label{H2}
\sup_{t\geq0}d\bigl(\varphi(t,\theta_{-t}\omega,x),x\bigr)\leq R_x(\omega),\ \omega\in\Omega,
\end{equation}
where $0<\lambda_0<\lambda$.
\end{enumerate}
Then there exists a unique random equilibrium $U(\omega)$ of the RDS $(\theta,\varphi)$ such that
\begin{equation}\label{Conclu1}
\lim_{t\rightarrow\infty}\varphi(t,\theta_{-t}\omega,x)=U(\omega)\quad {\rm in}\quad X
\end{equation}
for any $x\in X$ and $\omega\in\Omega$. Moreover, the random equilibrium $U:\Omega\rightarrow X$ is tempered, i.e.,
\begin{equation}\label{Conclu2}
d\bigl(U(\omega),x\bigr)\leq R_x(\omega)
\end{equation}
for any $x\in X$ and $\omega\in\Omega$.
\end{theorem}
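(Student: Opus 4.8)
The plan is to prove that, for each fixed $\omega$, the pullback trajectory $t\mapsto\varphi(t,\theta_{-t}\omega,x)$ is a Cauchy net in $X$, to identify its limit as the desired equilibrium $U(\omega)$, and then to verify invariance, temperedness and uniqueness in turn. The heart of the matter is a single estimate. For $t>s\ge t_0$ set $r=t-s\ge0$; the cocycle property (ii) of Definition~\ref{DRDS} with base point $\theta_{-t}\omega$, together with $\theta_{r}\theta_{-t}\omega=\theta_{-s}\omega$, gives
\[
\varphi(t,\theta_{-t}\omega,x)=\varphi\bigl(s,\theta_{-s}\omega,\varphi(r,\theta_{-t}\omega,x)\bigr).
\]
Comparing this with $\varphi(s,\theta_{-s}\omega,x)$ through the contraction (H1) (valid since $s\ge t_0$), and then bounding the inner displacement via (H2) applied at $\theta_{-s}\omega$ (legitimate because $\theta_{-t}\omega=\theta_{-r}(\theta_{-s}\omega)$), one obtains
\[
d\bigl(\varphi(t,\theta_{-t}\omega,x),\varphi(s,\theta_{-s}\omega,x)\bigr)\le\widehat{R}(\omega)e^{-\lambda s}\,d\bigl(\varphi(r,\theta_{-t}\omega,x),x\bigr)\le\widehat{R}(\omega)e^{-\lambda s}R_x(\theta_{-s}\omega).
\]
Since $R_x$ is $\lambda_0$-tempered, $R_x(\theta_{-s}\omega)\le C_x(\omega)e^{\lambda_0 s}$ with $C_x(\omega):=\sup_{s\ge0}e^{-\lambda_0 s}R_x(\theta_{-s}\omega)<\infty$, so the right-hand side is dominated by $\widehat{R}(\omega)C_x(\omega)e^{-(\lambda-\lambda_0)s}$, which tends to $0$ as $s\to\infty$ because $\lambda>\lambda_0$.

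By completeness of $(X,d)$ the Cauchy net converges, and I define $U(\omega):=\lim_{t\to\infty}\varphi(t,\theta_{-t}\omega,x)$; independence of the limit from $x$ is immediate from (H1), since $d(\varphi(t,\theta_{-t}\omega,x),\varphi(t,\theta_{-t}\omega,y))\le\widehat{R}(\omega)e^{-\lambda t}d(x,y)\to0$, which proves \eqref{Conclu1}. Invariance follows from the continuity in (iii) and the cocycle identity: $\varphi(t,\omega,U(\omega))=\lim_{s\to\infty}\varphi(t,\omega,\varphi(s,\theta_{-s}\omega,x))=\lim_{s\to\infty}\varphi(s+t,\theta_{-s}\omega,x)=U(\theta_t\omega)$, the last equality being the reindexing $s'=s+t$ in the definition of $U(\theta_t\omega)$. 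Passing to the limit $t\to\infty$ in (H2) and using continuity of $d(\cdot,x)$ yields \eqref{Conclu2}, from which $U$ inherits the $\lambda_0$-temperedness of $R_x$. Finally, uniqueness follows once more from (H1): any tempered random equilibrium $V$ satisfies $V(\omega)=\varphi(t,\theta_{-t}\omega,V(\theta_{-t}\omega))$, whence $d(U(\omega),V(\omega))\le\widehat{R}(\omega)e^{-\lambda t}d(U(\theta_{-t}\omega),V(\theta_{-t}\omega))$, and the temperedness of $U$ and $V$ forces the right-hand factor to grow strictly slower than $e^{\lambda t}$, so the product vanishes as $t\to\infty$ and $V=U$.

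The delicate point is the Cauchy estimate: one must split the pullback trajectory by the cocycle identity at exactly the right intermediate time so that the contraction factor $e^{-\lambda s}$ from (H1) is paired against the displacement bound $R_x(\theta_{-s}\omega)$ supplied by (H2), and then exploit that $R_x$ is only $\lambda_0$-tempered with $\lambda_0<\lambda$ to recover the net exponential decay $e^{-(\lambda-\lambda_0)s}$. Keeping track of the shifted base points $\theta_{-s}\omega$, $\theta_{-t}\omega$ and $\theta_{-r}(\theta_{-s}\omega)$ throughout, and verifying measurability of $\omega\mapsto U(\omega)$ as a pointwise limit of the measurable maps $\omega\mapsto\varphi(t,\theta_{-t}\omega,x)$, are the places where care is required; everything else is a routine passage to the limit.
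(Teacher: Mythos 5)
Your proof is correct and, at its core, identical to the paper's: your Cauchy estimate --- splitting $\varphi(t,\theta_{-t}\omega,x)=\varphi\bigl(s,\theta_{-s}\omega,\varphi(t-s,\theta_{-t}\omega,x)\bigr)$, applying (H1) at time $s$, bounding the inner displacement by $R_x(\theta_{-s}\omega)$ via (H2) applied at the shifted fiber $\theta_{-s}\omega$, and cashing in $\lambda_0$-temperedness to obtain the net decay $e^{-(\lambda-\lambda_0)s}$ --- is exactly the paper's computation \eqref{eq2}, and your invariance and temperedness arguments coincide with \eqref{eq3} and \eqref{eq4}. The one place you genuinely diverge is uniqueness: you observe that independence of the limit from the initial point is immediate from (H1) alone, since $d\bigl(\varphi(t,\theta_{-t}\omega,x),\varphi(t,\theta_{-t}\omega,y)\bigr)\leq\widehat{R}(\omega)e^{-\lambda t}d(x,y)\rightarrow0$ for each fixed $\omega$, so the two convergent trajectories must share their limit; the paper instead runs the contraction on the two constructed equilibria $U_x,U_y$ themselves, which requires the equilibrium identities \eqref{eq6}--\eqref{eq7}, the triangle inequality, and the bounds $R_x,R_y$ from \eqref{eq4} (its computation \eqref{eq8}). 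Your argument is leaner in that it needs only (H1) for this step, and your closing remark additionally yields uniqueness within the whole class of tempered random equilibria --- marginally more than the coincidence of the limits $U_x=U_y$ that the paper's proof establishes. Both routes are valid, and your attention to the base-point bookkeeping ($\theta_{-t}\omega=\theta_{-(t-s)}\theta_{-s}\omega$) and to measurability of the pointwise limit is exactly where the care is needed.
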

\begin{proof}
First, we claim that for any $x\in X$,  $\{\varphi(t,\theta_{-t}\omega,x):t\geq0\}$ is Cauchy in $X$, i.e., there exists a random equilibrium $U_x:\Omega\rightarrow X$ such that
\begin{equation}\label{eq1}
\lim_{t\rightarrow\infty}\varphi(t,\theta_{-t}\omega,x)=U_x(\omega),\quad \omega\in\Omega.
\end{equation}
In fact, using (H1) and (H2), for any $t_0\leq t_1<t_2$ and $\omega\in\Omega$, it is easy to see that
\begin{align}\label{eq2}
&d\bigl(\varphi(t_2,\theta_{-t_2}\omega,x),\varphi(t_1,\theta_{-t_1}\omega,x)\bigr)\nonumber\\
=&d\Bigl(\varphi\bigl(t_1,\theta_{-t_1}\omega,\varphi(t_2-t_1,\theta_{-t_2}\omega,x)\bigr),\varphi(t_1,\theta_{-t_1}\omega,x)\Bigr)\nonumber\\
\leq&\widehat{R}(\omega)e^{-\lambda t_1}d\bigl(\varphi(t_2-t_1,\theta_{-(t_2-t_1)}\circ\theta_{-t_1}\omega,x),x\bigr)\nonumber\\
\leq&\widehat{R}(\omega)e^{-\lambda t_1}\sup_{t\geq0}d\bigl(\varphi(t,\theta_{-t}\circ\theta_{-t_1}\omega,x),x\bigr)\nonumber\\
\leq&\widehat{R}(\omega)e^{-\lambda t_1}R_x(\theta_{-t_1}\omega)\nonumber\\
=&e^{-(\lambda-\lambda_0)t_1}\widehat{R}(\omega)e^{-\lambda_0 t_1}R_x(\theta_{-t_1}\omega)\nonumber\\
\leq&e^{-(\lambda-\lambda_0)t_1}\widehat{R}(\omega)\sup_{t\geq0}\left\{e^{-\lambda_0 t}R_x(\theta_{-t}\omega)\right\}\nonumber\\
&\longrightarrow0\quad {\rm as}\quad t_1\rightarrow\infty,
\end{align}
which implies that (\ref{eq1}) holds. Besides this, we can also have that
\begin{align}\label{eq3}
&\varphi\bigl(t,\omega,U_x(\omega)\bigr)\nonumber\\
=&\varphi\bigl(t,\omega,\lim_{\tilde{t}\rightarrow\infty}\varphi(\tilde{t},\theta_{-\tilde{t}}\omega,x)\bigr)\qquad\qquad\qquad\,{\rm by\ (\ref{eq1})}\nonumber\\
=&\lim_{\tilde{t}\rightarrow\infty}\varphi\bigl(t,\omega,\varphi(\tilde{t},\theta_{-\tilde{t}}\omega,x)\bigr)\qquad\qquad\qquad{\rm by\ continuity}\nonumber\\
=&\lim_{\tilde{t}\rightarrow\infty}\varphi(t+\tilde{t},\theta_{-\tilde{t}}\omega,x)\qquad\qquad\qquad\qquad{\rm by\ cocycle}\nonumber\\
=&\lim_{\tilde{t}\rightarrow\infty}\varphi(t+\tilde{t},\theta_{-(t+\tilde{t})}\circ\theta_t\omega,x)\nonumber\\
=&U_x(\theta_t\omega),\quad t\geq0,\ \omega\in\Omega.
\end{align}
That is, $U_x(\omega)$ is a random equilibrium. Furthermore, by (\ref{H2}) and (\ref{eq1}), it is clear that
\begin{equation}\label{eq4}
d\bigl(U_x(\omega),x\bigr)\leq R_x(\omega)
\end{equation}
for all $x\in X$ and $\omega\in\Omega$. Finally, we will show that for any $x,y\in X$,
\begin{equation}\label{eq5}
U_x(\omega)=U_y(\omega),\quad \omega\in\Omega.
\end{equation}
Since $U_x(\omega)$ and $U_y(\omega)$ are two random equilibria of $(\theta,\varphi)$, it follows that
\begin{equation}\label{eq6}\varphi\bigl(t,\theta_{-t}\omega,U_x(\theta_{-t}\omega)\bigr)=U_x(\omega),\quad t\geq0\end{equation}
and
\begin{equation}\label{eq7}\varphi\bigl(t,\theta_{-t}\omega,U_y(\theta_{-t}\omega)\bigr)=U_y(\omega),\quad t\geq0\end{equation}
for all $\omega\in\Omega$. Combining (\ref{H1}), (\ref{H2}), (\ref{eq4}), (\ref{eq6}) and (\ref{eq7}), we can easily get that
\begin{align}\label{eq8}
&d\bigl(U_x(\omega),U_y(\omega)\bigr)\nonumber\\
=&d\Bigl(\varphi\bigl(t,\theta_{-t}\omega,U_x(\theta_{-t}\omega)\bigr),\varphi\bigl(t,\theta_{-t}\omega,U_y(\theta_{-t}\omega)\bigr)\Bigr)\nonumber\\
\leq&\widehat{R}(\omega)e^{-\lambda t}d\bigl(U_x(\theta_{-t}\omega),U_y(\theta_{-t}\omega)\bigr)\nonumber\\
\leq&\widehat{R}(\omega)e^{-\lambda t}\left[d\bigl(U_x(\theta_{-t}\omega),x\bigr)+d\bigl(U_y(\theta_{-t}\omega),y\bigr)+d(x,y)\right]\nonumber\\
\leq&\widehat{R}(\omega)e^{-\lambda t}\left[R_x(\theta_{-t}\omega)+R_y(\theta_{-t}\omega)+d(x,y)\right]\nonumber\\
=&e^{-(\lambda-\lambda_0)t}\widehat{R}(\omega)\left[e^{-\lambda_0 t}R_x(\theta_{-t}\omega)+e^{-\lambda_0 t}R_y(\theta_{-t}\omega)+e^{-\lambda_0 t}d(x,y)\right]\nonumber\\
\leq&e^{-(\lambda-\lambda_0)t}\widehat{R}(\omega)\left(\sup_{t\geq0}\bigl\{e^{-\lambda_0 t}R_x(\theta_{-t}\omega)\bigr\}+\sup_{t\geq0}\bigl\{e^{-\lambda_0 t}R_y(\theta_{-t}\omega)\bigr\}+d(x,y)\right)
\end{align}
where $t\geq t_0$ and $\omega\in\Omega$. Let $t\rightarrow\infty$ in (\ref{eq8}), it is immediate that (\ref{eq5}) is true, which ends the proof.
\end{proof}

\textsc{{\it Remark}} 2.
If there exists a $x_0\in X$ such that
$\varphi(t,\omega,x_0)=x_0$
for all $\omega\in\Omega$ and $t\geq0$, then $x_0$ is a globally stable random equilibrium.

\textsc{{\it Remark}} 3.
In fact, the constant $\lambda_0$ given in (H2) can depend on the initial value $x$.

\textsc{{\it Remark}} 4. For the case of discrete time, assumptions (H1) and (H2) can be replaced by the following version (H1') and (H2')
\begin{enumerate}[{\rm(H1')}]
\item There exist a real number $\lambda>0$ and a random variable $\widehat R(\omega)$ such that for any $x,y\in X$,
\[
d\bigl(\varphi(n,\theta_{-n}\omega,x),\varphi(n,\theta_{-n}\omega,y)\bigr)\leq\widehat{R}(\omega)e^{-\lambda n}d\bigl(x,y\bigr),\ n\geq n_0,\ \omega\in\Omega,
\]
where $n_0\geq1$ is a positive integer;
\end{enumerate}

\begin{enumerate}[{\rm(H2')}]
\item For any $x\in X$, there exists a $\lambda_0$-tempered random variable $R_x(\omega)$ such that
\[
\sup_{n\in\mathbb{N}}d\bigl(\varphi(n,\theta_{-n}\omega,x),x\bigr)\leq R_x(\omega),\ \omega\in\Omega,
\]
where $0<\lambda_0<\lambda$.
\end{enumerate}

\section{Applications}
In this section, we will use Theorem \ref{thm1} to consider the global stability of random equilibria for different stochastic systems, which will be divided into four parts. For tractability, we consider only the case of SDEs. Nevertheless, the core methodology can be generalized naturally to SFDEs, SPDEs and so on.

\subsection{\bfseries Additive white noise: Globally Lipschitz condition}
Firstly, we consider the following $n$-dimensional SDEs with additive white noise
\begin{equation}\label{SDE1-1}
  dx(t)=\Bigl[Ax(t)+f\bigl(x(t)\bigr)\Bigr]dt+\Sigma dB(t),
\end{equation}
where $B(t)=\left(B_1(t),\ldots,B_m(t)\right)^{\rm T}$ is an $m$-dimensional two-sided Brownian motion on the standard Wiener space $(\Omega,\mathscr{F},\mathbb{P})$. Here, $\mathscr{F}$ is the Borel $\sigma$-algebra of
$\Omega=C_0(\mathbb{R},\mathbb{R}^m)=\{\omega(t)\ {\rm is} \ \mbox{continuous},\ \omega(0)=0,\ t\in\mathbb{R}\}$. In addition, $A=(A_{ij})_{n\times n}$ is an $n\times n$-dimensional matrix, $f:\mathbb{R}^n\rightarrow\mathbb{R}^n$ and $\Sigma=(\Sigma_{ij})_{n\times m}$ is an $n\times m$-dimensional matrix.

From now on, we set the Euclidean norm $|x|:=(\sum_{i=1}^n|x_i|^2)^{\frac12}$ and $\|D\|:=(\sum_{i=1}^n\sum_{j=1}^m|D_{ij}|^2)^{\frac12}$, where $x\in\mathbb{R}^n$ and $D\in\mathbb{R}^{n\times m}$. Let us denote by $\theta$ the Wiener shift operator defined by $\theta_t\omega(\cdot)=\omega(t+\cdot)-\omega(t)$ for all $t\in\mathbb{R}$, which is an ergodic MDS. To our purpose, we will present some hypotheses on $A$ and $f$:
\begin{enumerate}[({A1})]
\item  The top Lyapunov exponent of $\Phi(t)=e^{At}$ is a negative real number. That is, there exist two positive constants $\lambda>0$ and $C>0$ such that
\begin{equation}\label{eq10}
\|\Phi(t)\|:=\left(\sum_{i=1}^n\sum_{j=1}^n|\Phi_{ij}(t)|^2\right)^{\frac12}\leq Ce^{-\lambda t}
\end{equation}
for all $t\geq0$.
\end{enumerate}

\begin{enumerate}[({A2})]
\item $f$ is globally Lipschitz continuous, i.e.,
\begin{equation}\label{eq11}
|f(x)-f(y)|\leq L|x-y|
\end{equation}
for all $x,y\in\mathbb{R}^n$, where $L>0$ is the Lipschitz constant satisfying $\frac{LC}{\lambda}<1$.
\end{enumerate}

Define $\varphi_1(t,\omega,x)=x(t,\omega,x)$ to be the unique solution of (\ref{SDE1-1}) with the initial value $x(0)=x\in\mathbb{R}^n$, it is well known that $(\theta,\varphi_1)$ is an RDS generated by (\ref{SDE1-1}). In what follows, we will show the asymptotic behavior of $\varphi_1$.

\begin{theorem}\label{thm2} Assume that (A1) and (A2) hold, then there exists a unique random equilibrium $V_1(\omega)$ of the RDS $(\theta,\varphi_1)$ such that
\begin{equation}\label{eq12}
\lim_{t\rightarrow\infty}\varphi_1(t,\theta_{-t}\omega,x)=V_1(\omega)
\end{equation}
for any $x\in\mathbb{R}^n$ and $\omega\in\Omega$. Moreover, the random equilibrium $V_1:\Omega\rightarrow\mathbb{R}^n$ is tempered.
\end{theorem}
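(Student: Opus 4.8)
My plan is to verify the two hypotheses (H1) and (H2) of Theorem \ref{thm1} for the cocycle $\varphi_1$ and then to invoke that theorem directly; its conclusions \eqref{Conclu1}--\eqref{Conclu2} then deliver \eqref{eq12} and the temperedness of $V_1$. The starting point is the mild (variation-of-constants) representation
\[
\varphi_1(t,\omega,x)=e^{At}x+\int_0^t e^{A(t-s)}f\bigl(\varphi_1(s,\omega,x)\bigr)\,ds+\int_0^t e^{A(t-s)}\Sigma\,dB(s)(\omega),
\]
in which the stochastic convolution (the last term) does not depend on the initial datum $x$.

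To establish (H1) I would fix $x,y\in\mathbb{R}^n$ and subtract the representations for $\varphi_1(t,\omega,x)$ and $\varphi_1(t,\omega,y)$. Because the additive-noise term is common to both, it cancels and the difference solves a pathwise integral equation; estimating it with (A1) and the Lipschitz bound (A2) gives
\[
\bigl|\varphi_1(t,\omega,x)-\varphi_1(t,\omega,y)\bigr|\le Ce^{-\lambda t}|x-y|+\int_0^t CLe^{-\lambda(t-s)}\bigl|\varphi_1(s,\omega,x)-\varphi_1(s,\omega,y)\bigr|\,ds,
\]
and Gr\"onwall's inequality yields decay at the rate $\bar\lambda:=\lambda-LC$, which is strictly positive precisely because of the standing assumption $LC/\lambda<1$. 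Since $C$ is independent of $\omega$, the bound persists after replacing $\omega$ by $\theta_{-t}\omega$, so (H1) holds with contraction rate $\bar\lambda$ and the constant (hence tempered) random variable $\widehat R(\omega)\equiv C$.

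The substantive step is (H2). I would introduce the stationary Ornstein--Uhlenbeck process $z^\ast(\omega)=\int_{-\infty}^0 e^{-As}\Sigma\,dB(s)$, which is well defined and tempered by virtue of (A1), and pass to the conjugated variable $y(t,\omega,x)=\varphi_1(t,\omega,x)-z^\ast(\theta_t\omega)$; a direct computation shows that $y$ solves the pathwise random equation $\dot y=Ay+f\bigl(y+z^\ast(\theta_t\omega)\bigr)$, from which the noise has been removed. After pulling back $\omega\mapsto\theta_{-t}\omega$ and changing variables in the convolution, the forcing is governed by the \emph{weighted} integral $\int_{-\infty}^0 e^{\lambda r}\bigl(|f(0)|+L|z^\ast(\theta_r\omega)|\bigr)\,dr$, which converges because temperedness of $z^\ast$ makes the integrand decay like $e^{(\lambda-\varepsilon)r}$ as $r\to-\infty$. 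The crux is to use a \emph{sharp} Gr\"onwall estimate that exploits $\lambda>LC$: the naive Gr\"onwall factor $e^{LCt}$ would spuriously blow up, whereas the resolvent of the kernel $LCe^{-\lambda(t-s)}$ decays like $e^{-(\lambda-LC)t}$ and leaves a bound on $\sup_{t\ge0}\bigl|\varphi_1(t,\theta_{-t}\omega,x)-x\bigr|$ that is finite for every $\omega$.

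It then remains to check that $R_x(\omega):=\sup_{t\ge0}\bigl|\varphi_1(t,\theta_{-t}\omega,x)-x\bigr|$ is tempered. Evaluating the bound at $\theta_{-t}\omega$ expresses $R_x(\theta_{-t}\omega)$ through suprema and weighted integrals of $z^\ast$ along the orbit $\{\theta_{-\sigma}\omega:\sigma\ge t\}$; since $z^\ast$ is tempered these grow at most subexponentially, so $\sup_{t\ge0}e^{-\gamma t}R_x(\theta_{-t}\omega)<\infty$ for every $\gamma>0$. Hence $R_x$ is tempered and, by Remark 1, $\lambda_0$-tempered for any $\lambda_0\in(0,\bar\lambda)$, which verifies (H2). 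With (H1) and (H2) in hand, Theorem \ref{thm1} supplies the unique, pullback-attracting, tempered random equilibrium $V_1$ and \eqref{eq12}. I expect the main obstacle to be exactly the two places where $\lambda>LC$ is invoked: securing the sharp Gr\"onwall bound so that the estimate does not blow up, and establishing convergence of the weighted Ornstein--Uhlenbeck integrals together with the temperedness of $R_x$, both of which ultimately rest on the temperedness of $z^\ast$ guaranteed by (A1).
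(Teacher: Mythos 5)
Your proposal is correct and follows essentially the same route as the paper: verifying (H1) by subtracting the variation-of-constants representations (the additive noise cancels) and applying Gr\"onwall to obtain the rate $\lambda-LC>0$ with $\widehat R\equiv C$, and verifying (H2) by conjugating with the stationary Ornstein--Uhlenbeck process $z_1(\omega)=\int_{-\infty}^0 e^{-As}\Sigma\,dB(s)$, applying Gr\"onwall to the resulting random ODE, and using temperedness of $z_1$ (your ``sharp resolvent'' phrasing is the same computation as the paper's multiply-by-$e^{\lambda t}$-then-Gr\"onwall step, which likewise yields net decay $e^{-(\lambda-LC)t}$ rather than blow-up). The only cosmetic discrepancy is the weight in your limiting integral ($e^{\lambda r}$ versus the paper's $e^{(\lambda-LC)r}$), which does not affect convergence or the temperedness conclusion.
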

\begin{proof}
In order to use Theorem \ref{thm1}, it is sufficient to verify (H1) and (H2).
Due to the variation-of-constants formula \cite[Theorem 3.1]{M}, it is evident that
\begin{equation}\label{eq20}
\varphi_1(t,\omega,x)=\Phi(t)x+\int_0^t\Phi(t-s)f\bigl(\varphi_1(s,\omega,x)\bigr)ds+\int_0^t\Phi(t-s)\Sigma dB(s).
\end{equation}
Thus, for any different initial values $x,y\in\mathbb{R}^n$, we deduce that
\begin{align}\label{eq21}
&|\varphi_1(t,\omega,x)-\varphi_1(t,\omega,y)|\nonumber\\
\leq&\|\Phi(t)\|\cdot|x-y|+\int_0^t\left\|\Phi(t-s)\right\|\cdot\left|f\bigl(\varphi_1(s,\omega,x)\bigr)-f\bigl(\varphi_1(s,\omega,y)\bigr)\right|ds\nonumber\\
\leq& Ce^{-\lambda t}|x-y|+LCe^{-\lambda t}\int_0^te^{\lambda s}|\varphi_1(s,\omega,x)-\varphi_1(s,\omega,y)|ds,
\end{align}
and then
\begin{equation}\label{eq22}
e^{\lambda t}|\varphi_1(t,\omega,x)-\varphi_1(t,\omega,y)|
\leq C|x-y|+LC\int_0^te^{\lambda s}|\varphi_1(s,\omega,x)-\varphi_1(s,\omega,y)|ds.
\end{equation}
Applying the Gronwall inequality, it is obvious that
\begin{equation}\label{eq23}
|\varphi_1(t,\theta_{-t}\omega,x)-\varphi_1(t,\theta_{-t}\omega,y)|
\leq C|x-y|e^{-(\lambda-LC)t}
\end{equation}
for all $x,y\in\mathbb{R}^n$, $t\geq0$ and $\omega\in\Omega$, which proves (H1).

Furthermore, set
\begin{equation}\label{OU}z_1(t,\omega)\equiv z_1(\theta_t\omega)=\int_{-\infty}^t\Phi(t-s)\Sigma dB(s)\end{equation}
for all $t\in\mathbb{R}$ and $\omega\in\Omega$, which is the Ornstein-Uhlenbeck process satisfying the following affine SDEs
\begin{equation}\label{OU2}
  dz_1(t)=Az_1(t)dt+\Sigma dB(t).
\end{equation}
By (A1), it is easy to see that the random variable $z_1(\omega)$ is tempered with respect to $\theta$ and $z_1(\theta_t\omega)$ is continuous on $\mathbb{R}$ for any $\omega\in\Omega$, see Lemma 2.5.1 in \cite{Chu} or Proposition 3.1 in \cite{CS}. Let $\varphi_{z_1}(t,\omega,x)=\varphi_1(t,\omega,x)-z_1(t,\omega)$, applying It\^{o}'s formula, it follows immediately that
\begin{equation}\label{eq13}
d[\varphi_{z_1}(t,\omega,x)]=A\varphi_{z_1}(t,\omega,x)+f\bigl(\varphi_{z_1}(t,\omega,x)+z_1(t,\omega)\bigr),
\end{equation}
and then
\begin{equation}\label{eq13-2}
\varphi_{z_1}(t,\omega,x)=\Phi(t)x+\int_0^t\Phi(t-s)f\bigl(\varphi_{z_1}(s,\omega,x)+z_1(s,\omega)\bigr)ds
\end{equation}
for all $t\geq0$ and $\omega\in\Omega$. Therefore, by (A1) and (A2), it is easily seen that
\begin{align}\label{eq14}
\left|\varphi_{z_1}(t,\omega,x)\right|
\leq& Ce^{-\lambda t}|x|+ Ce^{-\lambda t}\int_0^t e^{\lambda s}\bigl(L|z_1(\theta_s\omega)|+|f(0)|\bigr)ds\nonumber\\
&+LCe^{-\lambda t}\int_0^{t}e^{\lambda s}\left|\varphi_{z_1}(s,\omega,x)\right|ds,
\end{align}
which implies that
\begin{align}\label{eq15}
e^{\lambda t}\left|\varphi_{z_1}(t,\omega,x)\right|
\leq& C|x|+ C\int_0^t e^{\lambda s}\bigl(L|z_1(\theta_s\omega)|+|f(0)|\bigr)ds\nonumber\\
&+LC\int_0^{t}e^{\lambda s}\left|\varphi_{z_1}(s,\omega,x)\right|ds.
\end{align}
Using (\ref{eq15}) and the Gronwall inequality, we have that
\begin{equation}\label{eq16}
e^{\lambda t}\left|\varphi_{z_1}(t,\omega,x)\right|\leq C|x|e^{LCt}+C\int_0^t e^{LC(t-s)}e^{\lambda s}\bigl(L|z_1(\theta_s\omega)|+|f(0)|\bigr)ds
\end{equation}
and so
\begin{equation}\label{eq17}
\left|\varphi_{z_1}(t,\omega,x)\right|\leq C|x|e^{-(\lambda-LC)t}+C\int_0^t e^{-(\lambda-LC)(t-s)}\bigl(L|z_1(\theta_s\omega)|+|f(0)|\bigr)ds,
\end{equation}
which together with the definition of $\theta$ gives that
\begin{align}\label{eq18}
&\left|\varphi_{z_1}(t,\theta_{-t}\omega,x)\right|\nonumber\\
\leq&C|x|e^{-(\lambda-LC)t}+C\int_0^t e^{-(\lambda-LC)(t-s)}\bigl(L|z_1(\theta_{s-t}\omega)|+|f(0)|\bigr)ds\nonumber\\
=&C|x|e^{-(\lambda-LC)t}+C\int_{-t}^0 e^{(\lambda-LC)s}\bigl(L|z_1(\theta_{s}\omega)|+|f(0)|\bigr)ds\nonumber\\
\leq&C|x|+C\int_{-\infty}^0 e^{(\lambda-LC)s}\bigl(L|z_1(\theta_{s}\omega)|+|f(0)|\bigr)ds\nonumber\\
\triangleq&\widetilde R_x^1(\omega).
\end{align}
Since $LC<\lambda$ and $z_1$ is tempered, by the similar argument in \cite{JL3}, we see at once that $\widetilde R_x^1(\omega)$ is also a tempered random variable. Consequently,
\begin{align}\label{eq19}
\left|\varphi_1(t,\theta_{-t}\omega,x)-x\right|&\leq|x|+\left|\varphi_{z_1}(t,\theta_{-t}\omega,x)\right|+|z_1(\omega)|\nonumber\\
&\leq|x|+\widetilde R_x^1(\omega)+|z_1(\omega)|\nonumber\\
&\triangleq R_x^1(\omega)
\end{align}
for all $x\in\mathbb{R}^n$, $t\geq0$ and $\omega\in\Omega$. This together with Remark 1 shows that (H2) is true. The proof is complete.
\end{proof}

\subsection{\bfseries Additive white noise: One-Sided dissipative Lipschitz condition}
Secondly, we study the following $n$-dimensional SDEs with additive white noise
\begin{equation}\label{SDE2-1}
  dx(t)=g\bigl(x(t)\bigr)dt+\Sigma dB(t),
\end{equation}
where $B(t)$ and $\Sigma$ are given in (\ref{SDE1-1}), $g:\mathbb{R}^n\rightarrow\mathbb{R}^n$ is a continuously differentiable function satisfying
\begin{enumerate}[({A3})]
\item For any $x,y\in\mathbb{R}^n$,
\begin{equation}\label{eq24}
\langle x-y,g(x)-g(y)\rangle\leq -L|x-y|^2,
\end{equation}
where $L>0$ and $\langle \cdot,\cdot\rangle$ is the standard inner product in $\mathbb{R}^n$.
\end{enumerate}

\begin{enumerate}[({A4})]
\item There exist constants $a>0$, $b>0$ and $p\geq1$ such that
\begin{equation}\label{eq24-2}
|g(x)|\leq a|x|^p+b
\end{equation}
for all $x\in\mathbb{R}^n$.
\end{enumerate}

\textsc{{\it Remark}} 5.
In fact, for $n\geq2$, (A3) cannot generally be obtained from (A1). For example, set
\begin{equation}\label{eq25}
A=\left[\begin{array}{cc} 0&-2\\
 3&-1\end{array}\right],
\end{equation}
it is a simple matter to check that the eigenvalues $\lambda_{1,2}=\frac{-1\pm\sqrt{23}{\rm i}}{2}$,
\begin{equation}\label{eq26}
\Phi(t)=e^{-\frac{t}{2}}\left[\begin{array}{cc} \cos\left(\frac{\sqrt{23}}{2}t\right)+\frac{\sin\left(\frac{\sqrt{23}}{2}t\right)}{\sqrt{23}}&-\frac{4\sin\left(\frac{\sqrt{23}}{2}t\right)}{\sqrt{23}}\\
\frac{6\sin\left(\frac{\sqrt{23}}{2}t\right)}{\sqrt{23}}&\cos\left(\frac{\sqrt{23}}{2}t\right)-\frac{\sin\left(\frac{\sqrt{23}}{2}t\right)}{\sqrt{23}}\end{array}\right],
\end{equation}
and then
\begin{equation}\label{eq27}
\|\Phi(t)\|=e^{-\frac{t}{2}}\sqrt{2+\frac{8}{23}\sin^2\left(\frac{\sqrt{23}}{2}t\right)}\leq 3\sqrt{\frac{6}{23}}e^{-\frac{t}{2}}.
\end{equation}
On the other hand,
\begin{equation}\label{eq28}
\langle x,Ax\rangle=x_1x_2-x_2^2,\quad x\in\mathbb{R}^2,
\end{equation}
which implies that we can not find any $L>0$ such that
\begin{equation}\label{eq29}
\langle x,Ax\rangle\leq-L|x|^2,\quad x\in\mathbb{R}^2.
\end{equation}
That is, (A1) is true, but (A3) is false.

Set $\varphi_2(t,\omega,x)=x(t,\omega,x)$, where $x(t,\omega,x)$ is the solution of (\ref{SDE2-1}) with the initial value $x(0)=x\in\mathbb{R}^n$. Thanks to the continuous differentiability of $g$, one-sided dissipative Lipschitz condition (A3) and the Ornstein-Uhlenbeck process, we can easily show that $(\theta,\varphi_2)$ is an RDS generated by (\ref{SDE2-1}), see Theorem 2.1.1 and Corollary 2.1.1 in \cite{Chu}. Our main results in this subsection are the following theorems.

\begin{theorem}\label{thm3} Assume that (A3) and (A4) hold, then there exists a unique random equilibrium $V_2(\omega)$ of the RDS $(\theta,\varphi_2)$ such that
\begin{equation}\label{eq30}
\lim_{t\rightarrow\infty}\varphi_2(t,\theta_{-t}\omega,x)=V_2(\omega)
\end{equation}
for any $x\in\mathbb{R}^n$ and $\omega\in\Omega$. Moreover, the random equilibrium $V_2:\Omega\rightarrow\mathbb{R}^n$ is tempered.
\end{theorem}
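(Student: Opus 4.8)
The plan is to verify hypotheses (H1) and (H2) of Theorem \ref{thm1} for $(\theta,\varphi_2)$ and then invoke that theorem, exactly as in the proof of Theorem \ref{thm2}; recall $(\theta,\varphi_2)$ is already an RDS by the discussion preceding the statement. The decisive simplification is that the noise is additive, so for two initial data $x,y$ the difference $w(t)=\varphi_2(t,\omega,x)-\varphi_2(t,\omega,y)$ solves the \emph{pathwise} ODE $\dot w=g(\varphi_2(t,\omega,x))-g(\varphi_2(t,\omega,y))$, the $\Sigma\,dB$ terms cancelling. For (H1) I would compute $\frac{d}{dt}|w|^2=2\langle w,\,g(\varphi_2(t,\omega,x))-g(\varphi_2(t,\omega,y))\rangle\leq-2L|w|^2$ by the one-sided dissipative Lipschitz condition (A3), and Gronwall's inequality gives $|\varphi_2(t,\omega,x)-\varphi_2(t,\omega,y)|\leq e^{-Lt}|x-y|$ for every $\omega$. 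Replacing $\omega$ by $\theta_{-t}\omega$ yields (H1) with $\lambda=L$, $\widehat R(\omega)\equiv1$ and $t_0=0$; note that (A4) plays no role in this step.

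For (H2) I would remove the noise by the standard Ornstein--Uhlenbeck substitution. Let $z(\theta_t\omega)$ be the stationary (hence tempered, path-continuous) solution of $dz=-z\,dt+\Sigma\,dB$, whose properties follow as for (\ref{OU}). Setting $y(t)=\varphi_2(t,\omega,x)-z(\theta_t\omega)$ turns (\ref{SDE2-1}) into the random ODE $\dot y=g(y+z)+z$ with $y(0)=x-z(\omega)$. Writing $g(y+z)=\bigl(g(y+z)-g(z)\bigr)+g(z)$, applying (A3) to the pair $(y+z,z)$ and then Young's inequality, I obtain
\begin{equation*}
\frac{d}{dt}|y|^2\leq-2L|y|^2+2\langle y,\,g(z)+z\rangle\leq-L|y|^2+\tfrac1L\,|g(z)+z|^2 .
\end{equation*}
Gronwall's inequality, evaluation at $\theta_{-t}\omega$, and the change of variable $r=s-t$ (as in the passage from (\ref{eq17}) to (\ref{eq18})) then give
\begin{equation*}
|y(t,\theta_{-t}\omega)|^2\leq e^{-Lt}|x-z(\theta_{-t}\omega)|^2+\tfrac1L\int_{-\infty}^{0} e^{Lr}\bigl|g(z(\theta_r\omega))+z(\theta_r\omega)\bigr|^2\,dr .
\end{equation*}

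The crux of the argument, and the step where (A4) is indispensable, is showing that the right-hand side is finite and tempered. Here I would use the polynomial bound $|g(z)+z|\leq a|z|^p+b+|z|$ from (A4) together with the temperedness of $z$: since $|z(\theta_r\omega)|$ grows subexponentially as $r\to-\infty$, for each $\gamma>0$ one has $|z(\theta_r\omega)|\leq M_\gamma e^{\gamma|r|}$, so choosing $\gamma$ with $2p\gamma<L$ makes the integrand decay like $e^{-(L-2p\gamma)|r|}$ and the integral converge; the same exponential-versus-subexponential balance keeps the whole bound finite and uniform in $t\geq0$, so its square root is a tempered random variable $\widetilde R_x^2(\omega)$. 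Since $\varphi_2(t,\theta_{-t}\omega,x)=y(t,\theta_{-t}\omega)+z(\omega)$, I conclude $\sup_{t\geq0}|\varphi_2(t,\theta_{-t}\omega,x)-x|\leq \widetilde R_x^2(\omega)+|z(\omega)|+|x|=:R_x^2(\omega)$, a tempered random variable, which by Remark 1 is $\lambda_0$-tempered for any $0<\lambda_0<L$. This establishes (H2), and Theorem \ref{thm1} delivers the unique tempered equilibrium $V_2$ with the asserted pullback convergence. I expect the main obstacle to be exactly this integrability and temperedness estimate, namely controlling the polynomial nonlinearity $g$ evaluated along the tempered noise path against the exponential weight produced by the dissipativity constant $L$.
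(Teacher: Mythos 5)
Your proposal is correct and follows essentially the same route as the paper: the pathwise cancellation of the additive noise plus (A3) and Gronwall give (H1) with $\widehat R\equiv1$ and $\lambda=L$, and the Ornstein--Uhlenbeck substitution $z$ solving $dz=-z\,dt+\Sigma\,dB$, the splitting $g(y+z)=(g(y+z)-g(z))+g(z)$ with Young's inequality, and the (A4) polynomial bound balanced against the exponential weight $e^{Lr}$ via temperedness of $z$ are exactly the paper's estimates (\ref{eq33})--(\ref{eq36}). No gaps; the only cosmetic difference is that the paper absorbs $|z|^2\leq|z|^{2p}+1$ into a single $\int_{-\infty}^0 e^{Ls}|z_2(\theta_s\omega)|^{2p}\,ds$ term rather than quoting a subexponential growth rate $M_\gamma e^{\gamma|r|}$ explicitly.
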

\begin{proof}
The proof will be divided into two parts. By (\ref{SDE2-1}), it follows naturally that
\begin{equation}\label{eq37}
\varphi_2(t,\omega,x)-\varphi_2(t,\omega,y)=x-y+\int_0^t\Bigl(g\bigl(\varphi_2(s,\omega,x)\bigr)-g\bigl(\varphi_2(s,\omega,y)\bigr)\Bigr)ds
\end{equation}
which together with (A3) implies that
\begin{align}\label{eq38}
&\frac{d}{dt}|\varphi_2(t,\omega,x)-\varphi_2(t,\omega,y)|^2\nonumber\\
=&2\left\langle\varphi_2(t,\omega,x)-\varphi_2(t,\omega,y),g\bigl(\varphi_2(t,\omega,x)\bigr)-g\bigl(\varphi_2(t,\omega,y)\bigr)\right\rangle\nonumber\\
\leq&-2L|\varphi_2(t,\omega,x)-\varphi_2(t,\omega,y)|^2.
\end{align}
Combining this and the Gronwall inequality, it is obvious that
\begin{equation}\label{eq39}
|\varphi_2(t,\theta_{-t}\omega,x)-\varphi_2(t,\theta_{-t}\omega,y)|\leq e^{-L t}|x-y|
\end{equation}
for all $x,y\in\mathbb{R}^n$, $t\geq0$ and $\omega\in\Omega$, which shows (H1).

Moreover, define
\begin{equation}\label{eq31}
z_2(t,\omega)\equiv z_2(\theta_t\omega)=\int_{-\infty}^te^{-(t-s)}\Sigma dB(s),
\end{equation}
which is the stationary solution (Ornstein-Uhlenbeck process) of the following affine SDEs
\begin{equation}\label{OU3}
dz_2(t)=-z_2(t)dt+\Sigma dB(t).
\end{equation}
Using It\^{o}'s formula, we can easily have that
\begin{equation}\label{eq32}
\varphi_2(t,\omega,x)-z_2(t,\omega)=x-z_2(\omega)+\int_0^t\Bigl(g\bigl(\varphi_2(s,\omega,x)\bigr)+z_2(s,\omega)\Bigr)ds,
\end{equation}
which yields that
\begin{align}\label{eq33}
&\frac{d}{dt}|\varphi_2(t,\omega,x)-z_2(t,\omega)|^2\nonumber\\
=&2\left\langle\varphi_2(t,\omega,x)-z_2(t,\omega),g\bigl(\varphi_2(t,\omega,x)\bigr)+z_2(t,\omega)\right\rangle\nonumber\\
=&2\left\langle\varphi_2(t,\omega,x)-z_2(t,\omega),g\bigl(\varphi_2(t,\omega,x)\bigr)-g\bigl(z_2(t,\omega)\bigr)\right\rangle\nonumber\\
&+2\left\langle\varphi_2(t,\omega,x)-z_2(t,\omega),g\bigl(z_2(t,\omega)\bigr)+z_2(t,\omega)\right\rangle\nonumber\\
\leq&-2L|\varphi_2(t,\omega,x)-z_2(t,\omega)|^2+L|\varphi_2(t,\omega,x)-z_2(t,\omega)|^2\nonumber\\
&+\frac1L|g\bigl(z_2(t,\omega)\bigr)+z_2(t,\omega)|^2\nonumber\\
=&-L|\varphi_2(t,\omega,x)-z_2(t,\omega)|^2+\frac1L|g\bigl(z_2(t,\omega)\bigr)+z_2(t,\omega)|^2.
\end{align}
Hence, by the Gronwall inequality and (A4), it is clear that
\begin{align}
&|\varphi_2(t,\omega,x)-z_2(t,\omega)|^2\nonumber\\
\leq& e^{-Lt}|x-z_2(0,\omega)|^2+\frac1L\int_0^te^{-L(t-s)}|g\bigl(z_2(s,\omega)\bigr)+z_2(s,\omega)|^2ds\nonumber\\
\leq& e^{-Lt}|x-z_2(\omega)|^2+\frac2L\int_0^te^{-L(t-s)}\left(|g\bigl(z_2(s,\omega)\bigr)|^2+|z_2(s,\omega)|^2\right)ds\nonumber\\
\leq& e^{-Lt}|x-z_2(\omega)|^2+\frac2L\int_0^te^{-L(t-s)}\left(2a^2|z_2(s,\omega)|^{2p}+2b^2+|z_2(s,\omega)|^2\right)ds\nonumber\\
\leq& e^{-Lt}|x-z_2(\omega)|^2+\frac2L\int_0^te^{-L(t-s)}\left(2a^2|z_2(s,\omega)|^{2p}+2b^2+|z_2(s,\omega)|^{2p}+1\right)ds\nonumber\\
\leq& \frac{4b^2+2}{L^2}+e^{-Lt}|x-z_2(\omega)|^2+\frac{4a^2+2}{L}\int_0^te^{-L(t-s)}|z_2(s,\omega)|^{2p}ds\nonumber
\end{align}
and then
\begin{align}\label{eq35}
&|\varphi_2(t,\theta_{-t}\omega,x)-z_2(t,\theta_{-t}\omega)|^2\nonumber\\
=&|\varphi_2(t,\theta_{-t}\omega,x)-z_2(\omega)|^2\nonumber\\
\leq&\frac{4b^2+2}{L^2}+e^{-Lt}|x-z_2(\theta_{-t}\omega)|^2+\frac{4a^2+2}{L}\int_0^te^{-L(t-s)}|z_2(s,\theta_{-t}\omega)|^{2p}ds\nonumber\\
=&\frac{4b^2+2}{L^2}+e^{-Lt}|x-z_2(\theta_{-t}\omega)|^2+\frac{4a^2+2}{L}\int_{-t}^0e^{Ls}|z_2(\theta_s\omega)|^{2p}ds\nonumber\\
\leq&\frac{4b^2+2}{L^2}+\sup_{t\geq0}\left\{e^{-Lt}|x-z_2(\theta_{-t}\omega)|^2\right\}+\frac{4a^2+2}{L}\int_{-\infty}^0e^{Ls}|z_2(\theta_s\omega)|^{2p}ds\nonumber\\
\triangleq&\widetilde R_x^2(\omega).
\end{align}
Note that $z_2$ is a tempered random variable and so also is $|z_2|^p$ for any $p\geq1$, we can directly verify that the random variable $\widetilde R_x^2$ is tempered.
Accordingly,
\begin{align}\label{eq36}
|\varphi_2(t,\theta_{-t}\omega,x)-x|^2
&\leq2|x|^2+4|\varphi_2(t,\theta_{-t}\omega,x)-z_2(\omega)|^2+4|z_2(\omega)|^2\nonumber\\
&\leq2|x|^2+4\widetilde R_x^2(\omega)+4|z_2(\omega)|^2\nonumber\\
&\triangleq R_x^2(\omega),\ t\geq0,\ x\in\mathbb{R}^n,
\end{align}
which together with Remark 1 gives (H2). Using Theorem \ref{thm1}, the proof is complete.
\end{proof}

\subsection{\bfseries Multiplicative white noise: Globally Lipschitz condition}
Thirdly, we investigate the following $n$-dimensional SDEs with multiplicative white noise
\begin{equation}\label{SDE3-1}
  dx(t)=\Bigl[Ax(t)+h\bigl(x(t)\bigr)\Bigr]dt+\sum_{k=1}^{m}\sigma_kx(t)dB_k(t),
\end{equation}
where $B(t)$ and $A$ are given in (\ref{SDE1-1}), $h:\mathbb{R}^n\rightarrow\mathbb{R}^n$ and $\sigma_k=(\sigma_k^{ij})_{n\times n}$ are $n\times n$-dimensional matrices, $k=1,\ldots,m$. Let $(\theta,\Psi)$ denote the RDS generated by the corresponding linear SDEs
\begin{equation}\label{SDE3-2}
  dx(t)=Ax(t)dt+\sum_{k=1}^{m}\sigma_kx(t)dB_k(t),
\end{equation}
where $\Psi(t)=\bigl(\Psi_{ij}(t)\bigr)_{n\times n}$ is the fundamental matrix of (\ref{SDE3-2}).
To prove our main results, we need to make some assumptions on $(\theta,\Psi)$
\begin{enumerate}[(A5)]
\item
The top Lyapunov exponent for the linear RDS $(\theta,\Psi)$ is a negative real number, i.e., there exist a constant $\lambda>0$ and a
tempered random variable $\overline{R}(\omega)>0$ such that
\begin{equation}
\|\Psi(t,\omega)\|:=\left(\sum_{i=1}^n\sum_{j=1}^n|\Psi_{ij}(t,\omega)|^2\right)^{\frac12}\leq \overline R(\omega)e^{-\lambda t}
\end{equation}
holds for all $t\geq0$, $\omega\in\Omega$. In addition,
$\overline R\in\mathcal{L}^1(\Omega,\mathscr{F},\mathbb{P};\mathbb{R}_+)$ and
\[\|\overline R\|_{\mathcal{L}^1}=\mathbb{E}\overline R=\int_\Omega \overline R(\omega)\mathbb{P}(d\omega).\]
\end{enumerate}

\begin{enumerate}[(A6)]
\item $h$ is globally Lipschitz continuous, i.e.,
\begin{equation}\label{eq11-2}
|h(x)-h(y)|\leq L|x-y|
\end{equation}
for all $x,y\in\mathbb{R}^n$, where $L>0$ is the Lipschitz constant satisfying $\frac{L\|\overline R\|_{\mathcal{L}^1}}{\lambda}<1$
\end{enumerate}

Let $\varphi_3(t,\omega,x)$ be the RDS generated by the SDEs (\ref{SDE3-1}), where $\varphi_3(t,\omega,x)=x(t,\omega,x)$ stands for the solution of (\ref{SDE3-1}) with the initial value $x(0)=x\in\mathbb{R}^n$. The following theorem describes the existence and global stability of random equilibria for $(\theta,\varphi_3)$.

\begin{theorem}\label{thm4} Assume that (A5) and (A6) hold, then there exists a unique random equilibrium $V_3(\omega)$ of the RDS $(\theta,\varphi_3)$ such that
\begin{equation}\label{eq40}
\lim_{t\rightarrow\infty}\varphi_3(t,\theta_{-t}\omega,x)=V_3(\omega)
\end{equation}
for any $x\in\mathbb{R}^n$ and $\omega\in\Omega$. Moreover, the random equilibrium $V_3:\Omega\rightarrow\mathbb{R}^n$ is tempered.
\end{theorem}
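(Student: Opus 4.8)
The plan is to verify the two hypotheses (H1) and (H2) of Theorem \ref{thm1} for the cocycle $(\theta,\varphi_3)$ and then invoke that theorem directly. The starting point is a variation-of-constants representation built on the linear fundamental matrix $\Psi$. Because the diffusion of (\ref{SDE3-1}) coincides with that of the homogeneous equation (\ref{SDE3-2}), the substitution $y(t)=\Psi(t,\omega)^{-1}\varphi_3(t,\omega,x)$ removes the stochastic integral by It\^{o}'s formula (the multiplicative noise is conjugated away), leaving a pathwise random ODE for $y$. Undoing the substitution and using the multiplicative cocycle identity $\Psi(t,\omega)\Psi(s,\omega)^{-1}=\Psi(t-s,\theta_s\omega)$ yields
\[
\varphi_3(t,\omega,x)=\Psi(t,\omega)x+\int_0^t\Psi(t-s,\theta_s\omega)h\bigl(\varphi_3(s,\omega,x)\bigr)\,ds,
\]
which is the analogue of (\ref{eq20}) and (\ref{eq13-2}) for multiplicative noise. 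I would also record the pathwise linear growth bound $|h(z)|\leq L|z|+|h(0)|$ that follows from (A6).

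For (H1), I would subtract two such representations for initial data $x,y$, take norms, and use (A5) together with (A6) to get
\[
|\varphi_3(t,\omega,x)-\varphi_3(t,\omega,y)|\leq\overline R(\omega)e^{-\lambda t}|x-y|+L\int_0^t\overline R(\theta_s\omega)e^{-\lambda(t-s)}|\varphi_3(s,\omega,x)-\varphi_3(s,\omega,y)|\,ds.
\]
Multiplying by $e^{\lambda t}$ and applying the Gronwall inequality produces the factor $\overline R(\omega)e^{-\lambda t}\exp\bigl(L\int_0^t\overline R(\theta_s\omega)\,ds\bigr)$; passing to the pullback $\theta_{-t}\omega$ turns $\int_0^t\overline R(\theta_s\omega)\,ds$ into $\int_{-t}^0\overline R(\theta_r\omega)\,dr$. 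Here the decisive step, and the reason (A5) imposes $\overline R\in\mathcal{L}^1$ and (A6) is phrased through $\|\overline R\|_{\mathcal{L}^1}$, is the Birkhoff--Khintchin ergodic theorem: since $\theta$ is ergodic and $\overline R\in\mathcal{L}^1$,
\[
\frac1t\int_{-t}^0\overline R(\theta_r\omega)\,dr\longrightarrow\mathbb{E}\overline R=\|\overline R\|_{\mathcal{L}^1}\qquad\text{as }t\to\infty
\]
on a $\theta$-invariant set of full measure. Because $\frac{L\|\overline R\|_{\mathcal{L}^1}}{\lambda}<1$, the exponent behaves like $-(\lambda-L\|\overline R\|_{\mathcal{L}^1})t$, which is genuinely negative; fixing any $\tilde\lambda\in(0,\lambda-L\|\overline R\|_{\mathcal{L}^1})$ and setting
\[
\widehat R(\omega):=\sup_{t\geq0}\Bigl\{e^{\tilde\lambda t}\,\overline R(\theta_{-t}\omega)\exp\Bigl(L\int_{-t}^0\overline R(\theta_r\omega)\,dr-\lambda t\Bigr)\Bigr\}
\]
gives a finite random variable (temperedness of $\overline R$ kills the prefactor), for which (H1) holds with rate $\tilde\lambda$ and $t_0=0$.

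For (H2), I would run the same representation with a single initial point $x$, bounding $|h|$ by its linear growth, to obtain an integral inequality for $|\varphi_3(t,\theta_{-t}\omega,x)|$. Rather than the crude Gronwall factor (which is lossy here), I would compare with the solution of the associated scalar linear equation in integrating-factor form, so that each increment of the $|h(0)|$-forcing at time $s$ is weighted by $\exp\bigl(L\int_s^t\overline R(\theta_{\tau-t}\omega)\,d\tau-\lambda(t-s)\bigr)$. After the change of variables $r=s-t$ this exhibits the uniform bound
\[
\sup_{t\geq0}|\varphi_3(t,\theta_{-t}\omega,x)|\leq C_x(\omega)+|h(0)|\int_{-\infty}^0 e^{\lambda r+L\int_r^0\overline R(\theta_u\omega)\,du}\,\overline R(\theta_r\omega)\,dr,
\]
whose improper integral converges because, again by the ergodic theorem, the exponent decays like $(\lambda-L\|\overline R\|_{\mathcal{L}^1})r\to-\infty$ as $r\to-\infty$. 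Adding $|x|$ gives $\sup_{t\geq0}|\varphi_3(t,\theta_{-t}\omega,x)-x|\leq R_x^3(\omega)$, and a temperedness check for $R_x^3$ analogous to (\ref{eq18}), (\ref{eq35}) and the argument cited from \cite{JL3} delivers (H2) with some $\lambda_0\in(0,\tilde\lambda)$. Theorem \ref{thm1} then yields the unique tempered, globally pullback-attracting random equilibrium $V_3$.

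The main obstacle is the passage from the pathwise Gronwall factor $\exp\bigl(L\int_0^t\overline R(\theta_s\omega)\,ds\bigr)$ to genuine exponential contraction: unlike the additive Lipschitz case (Theorem \ref{thm2}), where the deterministic bound $\|\Phi(t)\|\leq Ce^{-\lambda t}$ makes the rate $\lambda-LC$ transparent, here $\overline R$ is a genuine random variable and only its mean $\|\overline R\|_{\mathcal{L}^1}$ enters, so the negativity of the effective exponent rests entirely on the Birkhoff--Khintchin ergodic theorem and on condition (A6). A secondary technical point is that ergodic convergence holds only almost surely, so (H1), (H2) and hence $V_3$ must be established on a fixed $\theta$-invariant full-measure subset of $\Omega$, together with verifying that the random variables $\widehat R$ and $R_x^3$ produced above are indeed (weakly) tempered.
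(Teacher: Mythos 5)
Your proposal is correct and follows essentially the same route as the paper's proof: the same variation-of-constants representation through $\Psi$, the same Gronwall estimates for (H1) and (H2), and the same decisive use of the Birkhoff--Khintchin ergodic theorem to turn $L\int_{-t}^0\overline R(\theta_s\omega)\,ds-\lambda t$ into genuine exponential decay under $L\|\overline R\|_{\mathcal{L}^1}<\lambda$, followed by an appeal to Theorem \ref{thm1}. The only differences are cosmetic: the paper splits your single supremum $\widehat R$ into two factors using a margin $\varepsilon_0<\frac{\lambda-L\|\overline R\|_{\mathcal{L}^1}}{2}$, and it carries out in full the temperedness verification (its estimate (\ref{eq52}) for $\widetilde R^5$) that you correctly identify as necessary but defer.
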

\begin{proof}
Combining the variation-of-constants formula \cite[Chapter 3, Theorem 3.1]{M} and the cocycle property of $\Psi$, it is easily seen that
\begin{align}\label{eq41}
\varphi_3(t,\omega,x)&=\Psi(t,\omega)x+\Psi(t,\omega)\int_0^t\Psi^{-1}(s,\omega)h\bigl(\varphi_3(s,\omega,x)\bigr)ds\nonumber\\
&=\Psi(t,\omega)x+\int_0^t\Psi(t-s,\theta_s\omega)h\bigl(\varphi_3(s,\omega,x)\bigr)ds,
\end{align}
which together with (A5) and (A6) deduces that for any $x,y\in\mathbb{R}^n$,
\begin{align}\label{eq42}
&|\varphi_3(t,\omega,x)-\varphi_3(t,\omega,y)|\nonumber\\
\leq&\overline R(\omega)e^{-\lambda t}|x-y|+\int_0^t\overline R(\theta_s\omega)e^{-\lambda(t-s)}L|\varphi_3(s,\omega,x)-\varphi_3(s,\omega,y)|ds
\end{align}
and hence
\begin{align}\label{eq43}
&e^{\lambda t}|\varphi_3(t,\omega,x)-\varphi_3(t,\omega,y)|\nonumber\\
\leq&\overline R(\omega)|x-y|+L\int_0^t\overline R(\theta_s\omega)e^{\lambda s}|\varphi_3(s,\omega,x)-\varphi_3(s,\omega,y)|ds.
\end{align}
Applying the Gronwall inequality, we have that
\begin{equation}\label{eq44}
e^{\lambda t}|\varphi_3(t,\omega,x)-\varphi_3(t,\omega,y)|
\leq\overline R(\omega)|x-y|\exp\left(L\int_0^t\overline R(\theta_s\omega)ds\right),
\end{equation}
which implies that
{\small\begin{align}\label{eq45}
&|\varphi_3(t,\theta_{-t}\omega,x)-\varphi_3(t,\theta_{-t}\omega,y)|\nonumber\\
\leq&\overline R(\theta_{-t}\omega)|x-y|\exp\left(-\lambda t+L\int_0^t\overline R(\theta_{s-t}\omega)ds\right)\nonumber\\
\leq&e^{-\varepsilon_0t}\overline R(\theta_{-t}\omega)\exp\left(-(\lambda-2\varepsilon_0)t+L\int_{-t}^0\overline R(\theta_{s}\omega)ds\right)e^{-\varepsilon_0t}|x-y|\nonumber\\
\leq&\sup_{t\geq0}\left\{e^{-\varepsilon_0t}\overline R(\theta_{-t}\omega)\right\}\cdot\sup_{t\geq0}\left\{\exp\left(-(\lambda-2\varepsilon_0)t+L\int_{-t}^0\overline R(\theta_{s}\omega)ds\right)\right\}e^{-\varepsilon_0t}|x-y|\nonumber\\
\triangleq&\widehat{R}_1(\omega)e^{-\varepsilon_0t}|x-y|,
\end{align}}
where $0<\varepsilon_0<\frac{\lambda-L\|\overline R\|_{\mathcal{L}^1}}{2}$. Here, due to the Birkhoff-Khintchin ergodic theorem (see Arnold \cite[Appendix]{A}) and (A6), it follows that
\begin{equation}\label{eq46}
\lim_{t\rightarrow\infty}\exp\left(-(\lambda-2\varepsilon_0)t+L\int_{-t}^0\overline R(\theta_s\omega)ds\right)=0,\quad \omega\in\Omega,
\end{equation}
which together with the temperedness of $\overline R$ yields that the random variable $\widehat{R}_1$ is well defined. Therefore, the condition (H1) is correct.

In addition, using (\ref{eq41}), (A5) and (A6), we can see that
\begin{equation}\label{eq47}
|\varphi_3(t,\omega,x)|\leq\overline R(\omega)e^{-\lambda t}|x|+\int_0^t\overline R(\theta_s\omega)e^{-\lambda(t-s)}\bigl[L|\varphi_3(s,\omega,x)|+|h(0)|\bigr]ds
\end{equation}
and thus
\begin{align}\label{eq48}
&e^{\lambda t}|\varphi_3(t,\omega,x)|\nonumber\\
\leq&\overline R(\omega)|x|+L\int_0^t\overline R(\theta_s\omega)e^{\lambda s}|\varphi_3(s,\omega,x)|ds+\int_0^t\overline R(\theta_s\omega)e^{\lambda s}|h(0)|ds.
\end{align}
Again thanks to the Gronwall inequality, it is clear that
\begin{equation}\label{eq49}
e^{\lambda t}|\varphi_3(t,\omega,x)|\leq\overline R(\omega)|x|e^{L\int_0^t\overline R(\theta_s\omega)ds}
+\int_0^t\overline R(\theta_s\omega)e^{\lambda s}|h(0)|e^{L\int_s^t\overline R(\theta_u\omega)du}ds
\end{equation}
and so
\begin{align}
&|\varphi_3(t,\theta_{-t}\omega,x)|\nonumber\\
\leq&\overline R(\theta_{-t}\omega)e^{-\lambda t+L\int_0^t\overline R(\theta_{s-t}\omega)ds}|x|
+\int_0^t\overline R(\theta_{s-t}\omega)e^{-\lambda(t-s)}|h(0)|e^{L\int_s^t\overline R(\theta_{u-t}\omega)du}ds\nonumber\\
=&\overline R(\theta_{-t}\omega)e^{-\lambda t+L\int_{-t}^0\overline R(\theta_s\omega)ds}|x|
+\int_0^t\overline R(\theta_{s-t}\omega)e^{-\lambda(t-s)}|h(0)|e^{L\int_{s-t}^0\overline R(\theta_{u}\omega)du}ds\nonumber\\
=&\overline R(\theta_{-t}\omega)e^{-\lambda t+L\int_{-t}^0\overline R(\theta_s\omega)ds}|x|
+\int_{-t}^0\overline R(\theta_s\omega)e^{\lambda s}|h(0)|e^{L\int_s^0\overline R(\theta_{u}\omega)du}ds\nonumber\\
\leq&\sup_{t\geq0}\left\{e^{-\varepsilon_0t}\overline R(\theta_{-t}\omega)\right\}\cdot\sup_{t\geq0}\left\{\exp\left(-(\lambda-\varepsilon_0)t+L\int_{-t}^0\overline R(\theta_s\omega)ds\right)\right\}|x|\nonumber\\
&+\int_{-\infty}^0\overline R(\theta_s\omega)e^{\lambda s}|h(0)|e^{L\int_s^0\overline R(\theta_{u}\omega)du}ds\nonumber\\
\triangleq&\widetilde R_x^3(\omega)+\widetilde R^4(\omega)\nonumber
\end{align}
for all $t\geq0$ and $\omega\in\Omega$, where $0<\varepsilon_0<\frac{\lambda-L\|\overline R\|_{\mathcal{L}^1}}{2}$.

Next, we will show that $\widetilde R_x^3$ and $\widetilde R^4$ are both tempered. Note that $\overline R$ is tempered, in order to prove the temperedness of $\widetilde R_x^3$, it is sufficient to show that $\widetilde R^5$ is tempered, where
\begin{equation}\label{eq51}
\widetilde R^5(\omega)=\sup_{t\geq0}\left\{\exp\left(-(\lambda-\varepsilon_0)t+L\int_{-t}^0\overline R(\theta_s\omega)ds\right)\right\}.
\end{equation}
For any $\gamma>0$ and $\omega\in\Omega$, using the Birkhoff-Khintchin ergodic theorem (see Arnold \cite[Appendix]{A}), choose $\varepsilon_1<\min\{\frac{\gamma}{4L},\frac{\lambda-\varepsilon_0-L\|\overline R\|_{\mathcal{L}^1}}{L}\}$, then there exists $T=T(\omega,\varepsilon_1)>0$ such that
\begin{equation}
\bigl(\|\overline R\|_{\mathcal{L}^1}-\varepsilon_1\bigr)t\leq\int_{-t}^0\overline R(\theta_u\omega)du\leq\bigl(\|\overline R\|_{\mathcal{L}^1}+\varepsilon_1\bigr)t,\quad t\geq T,\nonumber
\end{equation}
which yields that
\begin{align}\label{eq52}
&\sup_{t\geq0}\bigl\{e^{-\gamma t}\widetilde R^5(\theta_{-t}\omega)\bigr\}\nonumber\\
=&\sup_{t\geq0}\left\{e^{-\gamma t}\sup_{s\geq0}\left\{\exp\left(-(\lambda-\varepsilon_0)s+L\int_{-s}^0\overline R(\theta_u\circ\theta_{-t}\omega)du\right)\right\}\right\}\nonumber\\
=&\sup_{t\geq0}\left\{e^{-\gamma t}\sup_{s\geq0}\left\{\exp\left(-(\lambda-\varepsilon_0)s+L\int_{-s-t}^{-t}\overline R(\theta_u\omega)du\right)\right\}\right\}\nonumber\\
\leq&\sup_{0\leq t\leq T}\left\{e^{-\gamma t}\sup_{s\geq0}\left\{\exp\left(-(\lambda-\varepsilon_0)s+L\int_{-s-t}^{-t}\overline R(\theta_u\omega)du\right)\right\}\right\}\nonumber\\
&+\sup_{t\geq T}\left\{e^{-\gamma t}\sup_{s\geq0}\left\{\exp\left(-(\lambda-\varepsilon_0)s+L\int_{-s-t}^{-t}\overline R(\theta_u\omega)du\right)\right\}\right\}\nonumber\\
\leq&\sup_{0\leq t\leq T}\left\{e^{(\lambda-\varepsilon_0-\gamma)t}\sup_{s\geq0}\left\{\exp\left(-(\lambda-\varepsilon_0)(s+t)+L\int_{-s-t}^{0}\overline R(\theta_u\omega)du\right)\right\}\right\}\nonumber\\
&+\sup_{t\geq T}\left\{e^{-\gamma t}\sup_{s\geq0}\left\{\exp\left(-(\lambda-\varepsilon_0-L\|\overline R\|_{\mathcal{L}^1}-L\varepsilon_1)s+2L\varepsilon_1t\right)\right\}\right\}\nonumber\\
\leq&e^{(\lambda-\varepsilon_0)T}\sup_{s\geq0}\left\{\exp\left(-(\lambda-\varepsilon_0)s+L\int_{-s}^{0}\overline R(\theta_u\omega)du\right)\right\}+e^{-(\gamma-2L\varepsilon_1)T}\nonumber\\
<&\infty.
\end{align}
That is, $\widetilde R_x^3$ is tempered. Furthermore, note that
\begin{align}\label{eq53}
&\int_{-\infty}^0\overline R(\theta_s\omega)e^{\lambda s}|h(0)|e^{L\int_s^0\overline R(\theta_{u}\omega)du}ds\nonumber\\
\leq&\sup_{s\leq0}\left\{\exp\left((\lambda-\varepsilon_0)s+L\int_{s}^0\overline R(\theta_u\omega)du\right)\right\}\int_{-\infty}^0\overline R(\theta_s\omega)e^{\varepsilon_0s}|h(0)|ds\nonumber\\
=&\widetilde R^5(\omega)\int_{-\infty}^0\overline R(\theta_s\omega)e^{\varepsilon_0s}|h(0)|ds,
\end{align}
which together with the temperedness of $\widetilde R^5$ and $\overline R$ implies that $\widetilde R^4$ is also tempered. Let $R_x^3=\widetilde R_x^3+\widetilde R^4$, it follows immediately that $R_x^3$ is a tempered random variable. Combining this and Remark 1, it is easy to verify the condition (H2). From Theorem \ref{thm1}, the proof is complete.
\end{proof}

\subsection{\bfseries Multiplicative white noise: One-Sided dissipative Lipschitz condition}
Fourthly, for convenience, we will discuss the following $n$-dimensional Stratonovich SDEs with multiplicative white noise
\begin{equation}\label{SDE4-1}
  dx_i(t)=g_i\bigl(x(t)\bigr)dt+\sum_{k=1}^{m}c_{k}x_i(t)\circ dB_k(t),\quad i=1,\ldots,n,
\end{equation}
where $c_{k}$ for $k=1,\ldots,m$ are constants, $B(t)$ is defined in (\ref{SDE1-1}) and $g=(g_1,\ldots,g_n):\mathbb{R}^n\rightarrow\mathbb{R}^n$ is given in (\ref{SDE2-1}) satisfying (A3).

Define $\varphi_4(t,\omega,x)=x(t,\omega,x)$, where $x(t,\omega,x)$ is the solution of (\ref{SDE4-1}) with the initial value $x(0)=x\in\mathbb{R}^n$. In what follows, we will denote by $u(\omega)$ the random variable in $\mathbb{R}^m$ such that $u(t,\omega)\triangleq u(\theta_t\omega)=\bigl(u_1(\theta_t\omega),\ldots,u_m(\theta_t\omega)\bigr)$ is the stationary Ornstein-Uhlenbeck process which solves the equations
\begin{equation}\label{OU4-1}
du_k(t)=-u_k(t)dt+dB_k(t),\quad k=1,\ldots,m.
\end{equation}
In order to verify the existence of stochastic flows, set $y(t,\omega,y)\triangleq x(t,\omega,x)\cdot\exp\{-u^c(\theta_t\omega)\}$, where
\begin{equation}\label{OU4-2}
u^c(\omega)=\sum_{k=1}^mc_{k}u_k(\omega)
\end{equation}
is a tempered random variable satisfying
\begin{equation}\label{OU4-3}
\lim_{t\rightarrow\pm\infty}\frac{|u^c(\theta_t\omega)|}{|t|}=0,\quad \omega\in\Omega.
\end{equation}
Using It\^{o}'s formula and  (\ref{SDE4-1}), we have that
\begin{equation}\label{SDE4-2}
  \frac{dy(t)}{dt}=G\bigl(\theta_t\omega,y(t)\bigr),
\end{equation}
where
\begin{equation}\label{SDE4-3}
G\bigl(\omega,y\bigr)=\exp\{-u^c(\omega)\}\cdot g\bigl(y\cdot\exp\{u^c(\omega)\}\bigr)+y\cdot u^c(\omega).
\end{equation}
Thanks to Theorem 2.1.1 and Corollary 2.1.1 in \cite{Chu}, it follows that the RDEs (\ref{SDE4-2}) generates an RDS $(\theta,\psi)$ in $\mathbb{R}^n$.
Moreover, we can get the relation
\begin{equation}\label{SDE4-4}
\varphi_4(t,\omega,x)=T\Bigl(\theta_t\omega,\psi\bigl(t,\omega,T^{-1}(\omega,x)\bigr)\Bigr),\quad t\geq0,\ \omega\in\Omega,
\end{equation}
where the linear mapping $T(\omega,\cdot)$ is a homeomorphism on $\mathbb{R}^n$ given by
\begin{equation}\label{SDE4-5}
T(\omega,y)=y\cdot\exp\{u^c(\omega)\},\quad \omega\in\Omega.
\end{equation}
Applying Lemma 2.2 in \cite{CKS}, it is clear that $(\theta,\varphi_4)$ is an RDS generated by (\ref{SDE4-1}). Now, we can show the existence of globally stable stationary solutions for $(\theta,\varphi_4)$ in the following theorems.

\begin{theorem}\label{thm5} Assume that (A3) holds, then there exists a unique random equilibrium $V_4(\omega)$ of the RDS $(\theta,\varphi_4)$ such that
\begin{equation}\label{SDE4-6}
\lim_{t\rightarrow\infty}\varphi_4(t,\theta_{-t}\omega,x)=V_4(\omega)
\end{equation}
for any $x\in\mathbb{R}^n$ and $\omega\in\Omega$. Moreover, the random equilibrium $V_4:\Omega\rightarrow\mathbb{R}^n$ is tempered.
\end{theorem}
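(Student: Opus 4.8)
The plan is to apply Theorem~\ref{thm1} to $(\theta,\varphi_4)$, transferring every estimate through the conjugacy (\ref{SDE4-4})--(\ref{SDE4-5}) to the auxiliary RDS $(\theta,\psi)$ generated by the random differential equation (\ref{SDE4-2}). Since $T(\omega,\cdot)$ is scalar multiplication by $\exp\{u^c(\omega)\}$, setting $\tilde x=x\exp\{-u^c(\theta_{-t}\omega)\}$ gives
\[
\varphi_4(t,\theta_{-t}\omega,x)=\exp\{u^c(\omega)\}\,\psi(t,\theta_{-t}\omega,\tilde x),
\]
so it suffices to estimate $\psi$ and then absorb the sub-exponential prefactors coming from $u^c$. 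The first step is to record the dissipativity inherited by $G$: writing $\xi_i=y_i\exp\{u^c(\omega)\}$ in (\ref{SDE4-3}) and using (A3), the conjugation factor $\exp\{-2u^c\}$ cancels against $|\xi_1-\xi_2|^2=\exp\{2u^c\}|y_1-y_2|^2$, leaving only the linear Stratonovich drift, so that
\[
\langle y_1-y_2,\,G(\omega,y_1)-G(\omega,y_2)\rangle\leq\bigl(u^c(\omega)-L\bigr)|y_1-y_2|^2.
\]

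To verify (H1), I would differentiate $|y_1(t)-y_2(t)|^2$ along (\ref{SDE4-2}) and apply the Gronwall inequality to get $|\psi(t,\omega,y_1)-\psi(t,\omega,y_2)|\leq|y_1-y_2|\exp(-Lt+\int_0^t u^c(\theta_s\omega)\,ds)$. Replacing $\omega$ by $\theta_{-t}\omega$, using $\theta_s\circ\theta_{-t}=\theta_{s-t}$ and the substitution $s\mapsto s-t$, and restoring the prefactor yields
\[
|\varphi_4(t,\theta_{-t}\omega,x)-\varphi_4(t,\theta_{-t}\omega,y)|\leq e^{u^c(\omega)}e^{-u^c(\theta_{-t}\omega)}\exp\Bigl(-Lt+\int_{-t}^0 u^c(\theta_s\omega)\,ds\Bigr)|x-y|.
\]
Fixing any $\lambda\in(0,L)$, I would set $\widehat R(\omega)$ equal to the supremum over $t\ge0$ of the factor multiplying $e^{-\lambda t}|x-y|$; its exponent is $-(L-\lambda)t-u^c(\theta_{-t}\omega)+\int_{-t}^0 u^c(\theta_s\omega)\,ds$, in which $-u^c(\theta_{-t}\omega)=o(t)$ by (\ref{OU4-3}) and $\int_{-t}^0 u^c(\theta_s\omega)\,ds=o(t)$ by the Birkhoff--Khintchin ergodic theorem, because $u^c$ is an integrable zero-mean stationary process with $\mathbb{E}u^c=\sum_k c_k\mathbb{E}u_k=0$. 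Hence the exponent tends to $-\infty$, the supremum is finite, and the inequality (\ref{H1}) holds with this $\lambda$.

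For (H2), I would instead differentiate $|y(t)|^2$ and use (A3) with second argument $0$, giving $\langle y,G(\omega,y)\rangle\le(u^c-L)|y|^2+e^{-u^c}|g(0)|\,|y|$; here the multiplicative (scaling) conjugacy is compatible with the homogeneity of the inner product, so only the constant $g(0)$ survives and, in contrast to the additive case of Theorem~\ref{thm3}, no polynomial growth hypothesis such as (A4) is required. Gronwall then bounds $|\psi(t,\omega,y)|$ by a decaying homogeneous term plus $|g(0)|\int_0^t\exp(-L(t-s)+\int_s^t u^c(\theta_r\omega)\,dr)e^{-u^c(\theta_s\omega)}\,ds$. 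Passing to $\theta_{-t}\omega$ and letting $t\to\infty$, the homogeneous term stays bounded exactly as in (H1), while the inhomogeneous integral increases to $\int_{-\infty}^0\exp(Ls+\int_s^0 u^c(\theta_r\omega)\,dr)e^{-u^c(\theta_s\omega)}\,ds<\infty$, the integrand decaying since $Ls\to-\infty$ dominates the $o(|s|)$ contributions. Thus $\sup_{t\ge0}|\varphi_4(t,\theta_{-t}\omega,x)|\triangleq\widetilde R_x(\omega)$ is finite, and $|\varphi_4(t,\theta_{-t}\omega,x)-x|\le\widetilde R_x(\omega)+|x|\triangleq R_x(\omega)$ gives the bound (\ref{H2}) once $R_x$ is shown to be tempered.

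The main obstacle is the temperedness bookkeeping, identical in spirit to the delicate estimate (\ref{eq52}): one must check that $\widehat R$ is finite and that $R_x$ is $\lambda_0$-tempered for some $\lambda_0\in(0,\lambda)$, both of which reduce to bounding $\exp(\int_{-t}^0 u^c(\theta_s\omega)\,ds)$ uniformly in $t$ and under the additional pullback needed for temperedness. This is precisely where the ergodic theorem is indispensable: the sublinear bound (\ref{OU4-3}) only controls $u^c(\theta_{-t}\omega)$ pointwise and would leave the time integral as large as $O(t^2)$, whereas Birkhoff--Khintchin upgrades it to $\int_{-t}^0 u^c(\theta_s\omega)\,ds=o(t)$ exactly because $\mathbb{E}u^c=0$. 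This vanishing mean is also why, unlike the smallness condition in (A6) for Theorem~\ref{thm4}, no restriction on $L$ is needed here: the contraction rate is the full dissipativity constant $L$ perturbed only by a zero-mean fluctuation, so the splitting into $e^{-(L-\lambda)t}$ times a tempered remainder goes through for every $\lambda\in(0,L)$. With the temperedness of $R_x$ thereby confirmed, Theorem~\ref{thm1} yields the unique random equilibrium $V_4$, the pullback convergence (\ref{SDE4-6}), and its temperedness, completing the argument.
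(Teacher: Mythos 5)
Your proposal is correct, and for condition (H2) it essentially coincides with the paper's own argument: the paper also passes to the conjugated random ODE (\ref{SDE4-2}), derives a pathwise differential inequality for $|\psi|^2$ (using Young's inequality where you keep a linear term in $|\psi|$ — an immaterial difference), pulls back, and invokes (\ref{OU4-3}) together with the Birkhoff--Khintchin limit (\ref{SDE4-16}) to get finiteness and temperedness of the bound. Where you genuinely diverge is (H1). The paper never uses the conjugacy for the contraction estimate: it applies It\^{o}'s formula directly to $|\varphi_4(t,\omega,x)-\varphi_4(t,\omega,y)|^2$ under the Stratonovich dynamics, obtaining a stochastic differential inequality driven by the combined Brownian motion $\widetilde B$, then invokes the Ikeda--Watanabe comparison theorem to dominate it by the explicit scalar solution $|x-y|^2\exp\{-2Lt+2C\widetilde B(t,\omega)\}$, and finally controls the pullback supremum via the law of the iterated logarithm. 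You instead transfer (A3) through the scalar conjugacy to get
\[
\langle y_1-y_2,\,G(\omega,y_1)-G(\omega,y_2)\rangle\leq\bigl(u^c(\omega)-L\bigr)|y_1-y_2|^2,
\]
apply a purely pathwise Gronwall argument to $\psi$, and control the resulting exponent $-(L-\lambda)t-u^c(\theta_{-t}\omega)+\int_{-t}^0u^c(\theta_s\omega)\,ds$ by (\ref{OU4-3}) and the ergodic theorem. Both are rigorous and yield the same conclusion (any rate $\lambda\in(0,L)$ with a finite random prefactor); your route is more unified and elementary in that it needs no stochastic comparison theorem and no law of the iterated logarithm, recycling exactly the two facts about $u^c$ that the (H2) step requires anyway, whereas the paper's route keeps the (H1) constant tied to an explicit Brownian functional. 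Your observation that pointwise sublinearity of $u^c$ alone would only give an $O(t^2)$ bound on the time integral, so that the zero-mean ergodic average is genuinely needed, is accurate and is precisely the role of (\ref{SDE4-16}) in the paper. The one place where you, like the paper, leave details compressed is the final temperedness verification of $R_x$ (the analogue of the computation (\ref{eq52}) done for Theorem \ref{thm4}); since the paper itself disposes of this with ``we check at once,'' this is not a gap relative to the published argument, but if you write this up you should carry out that supremum estimate explicitly.
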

\begin{proof}
By Theorem \ref{thm1}, we only need to show (H1) and (H2). Firstly, from (\ref{SDE4-1}) and It\^{o}'s formula, we have that
\begin{align}\label{SDE4-7}
&d\left|\varphi_4(t,\omega,x)-\varphi_4(t,\omega,y)\right|^2\nonumber\\
=&2\Bigl\langle\varphi_4(t,\omega,x)-\varphi_4(t,\omega,y),g\bigl(\varphi_4(t,\omega,x)\bigr)-g\bigl(\varphi_4(t,\omega,y)\bigr)\Bigr\rangle dt\nonumber\\
&+2\left|\varphi_4(t,\omega,x)-\varphi_4(t,\omega,y)\right|^2\circ\sum_{k=1}^mc_{k}dB_k(t)\nonumber\\
\leq&-2L\left|\varphi_4(t,\omega,x)-\varphi_4(t,\omega,y)\right|^2dt\nonumber\\
&+2\sqrt{\sum_{k=1}^mc_{k}^2}\cdot\left|\varphi_4(t,\omega,x)-\varphi_4(t,\omega,y)\right|^2\circ d\widetilde{B}(t),
\end{align}
where
\begin{equation}\label{SDE4-8}
\widetilde{B}(t)=\frac{1}{\sqrt{\sum_{k=1}^mc_{k}^2}}\sum_{k=1}^mc_{k}B_k(t),\quad t\geq0,
\end{equation}
is a one dimensional two-sided Brownian motion, see Theorem 1.4.4 in \cite{M}. Combining (\ref{SDE4-7}) and the comparison theorem for solutions of SDE, see Theorem 1.1 in \cite{IW}, it is evident that
\begin{equation}\label{SDE4-9}
\left|\varphi_4(t,\omega,x)-\varphi_4(t,\omega,y)\right|^2\leq|x-y|^2\exp\left\{-2Lt+2\sqrt{\sum_{k=1}^mc_{k}^2}\widetilde{B}(t,\omega)\right\}
\end{equation}
for all $t\geq0$ and $\omega\in\Omega$. Let $C=\sqrt{\sum_{k=1}^mc_{k}^2}$, it is a simple matter that
\begin{align}\label{SDE4-10}
&\left|\varphi_4(t,\theta_{-t}\omega,x)-\varphi_4(t,\theta_{-t}\omega,y)\right|\nonumber\\
\leq&|x-y|\exp\left(-Lt-C\widetilde{B}(-t,\omega)\right)\nonumber\\
\leq&\sup_{t\geq0}\left\{\exp\left(-\varepsilon_2t-C\widetilde{B}(-t,\omega)\right)\right\}e^{-(L-\varepsilon_2)t}|x-y|\nonumber\\
\triangleq&\widehat{R}_2(\omega)e^{-(L-\varepsilon_2)t}|x-y|,
\end{align}
where $0<\varepsilon_2<L$ and the random variable $\widehat{R}_2$ is well defined based on the law of iterated logarithm. Therefore, we have proved the condition (H1).

In the other hand, by (\ref{SDE4-2}) and (\ref{SDE4-3}), we can obtain that
\begin{align}\label{SDE4-11}
&\frac{d}{dt}|\psi(t,\omega,y)|^2\nonumber\\
=&2\Bigl\langle\psi(t,\omega,y),G\bigl(\theta_t\omega,\psi(t,\omega,y)\bigr)\Bigr\rangle\nonumber\\
=&2\Bigl\langle\psi(t,\omega,y),\exp\{-u^c(\theta_t\omega)\}\cdot g\bigl(\psi(t,\omega,y)\cdot\exp\{u^c(\theta_t\omega)\}\bigr)\Bigr\rangle\nonumber\\
&+2u^c(\theta_t\omega)\cdot|\psi(t,\omega,y)|^2\nonumber\\
=&2\exp\{-2u^c(\theta_t\omega)\}\Bigl\langle\psi(t,\omega,y)\cdot\exp\{u^c(\theta_t\omega)\},g\bigl(\psi(t,\omega,y)\cdot\exp\{u^c(\theta_t\omega)\}\bigr)\Bigr\rangle\nonumber\\
&+2u^c(\theta_t\omega)\cdot|\psi(t,\omega,y)|^2\nonumber\\
\leq&-2L\exp\{-2u^c(\theta_t\omega)\}\left|\psi(t,\omega,y)\cdot\exp\{u^c(\theta_t\omega)\}\right|^2\nonumber\\
&+2\exp\{-2u^c(\theta_t\omega)\}\Bigl\langle\psi(t,\omega,y)\cdot\exp\{u^c(\theta_t\omega)\},g(0)\Bigr\rangle\nonumber\\
&+2u^c(\theta_t\omega)\cdot|\psi(t,\omega,y)|^2\nonumber\\
\leq&-2L\left|\psi(t,\omega,y)\right|^2+L\left|\psi(t,\omega,y)\right|^2+\frac{1}{L}\exp\{-2u^c(\theta_t\omega)\}\cdot|g(0)|^2\nonumber\\
&+2u^c(\theta_t\omega)\cdot|\psi(t,\omega,y)|^2\nonumber\\
=&\bigl(-L+2u^c(\theta_t\omega)\bigr)\left|\psi(t,\omega,y)\right|^2+\frac{|g(0)|^2}{L}\exp\{-2u^c(\theta_t\omega)\},
\end{align}
which together with the Gronwall inequality implies that
\begin{align}\label{SDE4-12}
&|\psi(t,\omega,y)|^2\nonumber\\
\leq&e^{-Lt+2\int_0^tu^c(\theta_s\omega)ds}|y|^2
+\frac{|g(0)|^2}{L}\int_0^te^{-2u^c(\theta_s\omega)}e^{-L(t-s)+2\int_s^tu^c(\theta_\tau\omega)d\tau}ds.
\end{align}
From (\ref{SDE4-4}) and (\ref{SDE4-5}), it is obvious that
\begin{equation}\label{SDE4-13}
\varphi_4(t,\omega,x)=\exp\{u^c(\theta_t\omega)\}\cdot\psi\bigl(t,\omega,\exp\{-u^c(\omega)\}x\bigr),\quad t\geq0,\ \omega\in\Omega,
\end{equation}
and then
\begin{align}\label{SDE4-14}
&|\varphi_4(t,\omega,x)|^2\nonumber\\
\leq&e^{-Lt+2u^c(\theta_t\omega)-2u^c(\omega)+2\int_0^tu^c(\theta_s\omega)ds}|x|^2\nonumber\\
&+\frac{|g(0)|^2}{L}e^{2u^c(\theta_t\omega)}\int_0^te^{-2u^c(\theta_s\omega)}e^{-L(t-s)+2\int_s^tu^c(\theta_\tau\omega)d\tau}ds.
\end{align}
Therefore,
\begin{align}\label{SDE4-15}
&|\varphi_4(t,\theta_{-t}\omega,x)|^2\nonumber\\
\leq&e^{-Lt+2u^c(\omega)-2u^c(\theta_{-t}\omega)+2\int_0^tu^c(\theta_{s-t}\omega)ds}|x|^2\nonumber\\
&+\frac{|g(0)|^2}{L}e^{2u^c(\omega)}\int_0^te^{-2u^c(\theta_{s-t}\omega)}e^{-L(t-s)+2\int_s^tu^c(\theta_{\tau-t}\omega)d\tau}ds\nonumber\\
=&e^{-Lt+2u^c(\omega)-2u^c(\theta_{-t}\omega)+2\int_{-t}^0u^c(\theta_{s}\omega)ds}|x|^2\nonumber\\
&+\frac{|g(0)|^2}{L}e^{2u^c(\omega)}\int_{-t}^0e^{-2u^c(\theta_{s}\omega)+Ls+2\int_s^0u^c(\theta_{\tau}\omega)d\tau}ds.
\end{align}
By the Birkhoff-Khintchin ergodic theorem (see Arnold \cite[Appendix]{A}), we can easily show that
\begin{equation}\label{SDE4-16}\lim_{t\rightarrow\infty}\frac1t\int_{-t}^0u^c(\theta_s\omega)ds=\mathbb{E}u^c=0,\quad \omega\in\Omega,\end{equation}
which together with (\ref{OU4-3}) gives that
\begin{align}\label{SDE4-17}
&|\varphi_4(t,\theta_{-t}\omega,x)|^2\nonumber\\
\leq&\sup_{t\geq0}\left\{\exp\left(-Lt-2u^c(\theta_{-t}\omega)+2\int_{-t}^0u^c(\theta_{s}\omega)ds\right)\right\}e^{2u^c(\omega)}|x|^2\nonumber\\
&+\frac{|g(0)|^2}{L}e^{2u^c(\omega)}\sup_{s\leq0}\left\{\exp\left(\frac L2s-2u^c(\theta_{s}\omega)+2\int_{s}^0u^c(\theta_{\tau}\omega)d\tau\right)\right\}
\int_{-\infty}^0e^{\frac L2s}ds\nonumber\\
\leq&\sup_{t\geq0}\left\{\exp\left(-\frac L2t-2u^c(\theta_{-t}\omega)+2\int_{-t}^0u^c(\theta_{s}\omega)ds\right)\right\}e^{2u^c(\omega)}\left(|x|^2+\frac{2|g(0)|^2}{L^2}\right)\nonumber\\
\triangleq&R_x^4(\omega)
\end{align}
for all $t\geq0$ and $\omega\in\Omega$. From (\ref{OU4-3}) and (\ref{SDE4-16}), we check at once that the random variable $R_x^4$ is tempered. Consequently, by (\ref{SDE4-17}), Remark 1 and Theorem \ref{thm1}, the proof is complete.
\end{proof}

\textsc{{\it Remark}} 6.
Compared to the classical results, see Theorem 4.4 and Corollary 4.7 in \cite{CKS}, we remove the globally Lipschitz condition and the commutativity between the function $g$ and the transform $T$ in Theorem \ref{thm5}.

\textsc{{\it Remark}} 7.
{\rm If $h(0)=0$ and $g(0)=0$, then $V_3(\omega)=V_4(\omega)\equiv0$ for all $\omega\in\Omega$. If $h(0)\neq0$ and $g(0)\neq0$, it is evident that the random equilibria $V_3$ and $V_4$ presented in Theorem \ref{thm4} and Theorem \ref{thm5} are nontrivial.

\section{Conclusion and open problems}
In this paper, under some suitable conditions, we have proved an abstract criterion to guarantee that the RDS $(\theta,\varphi)$ admits a unique stationary solution, which is exponential stable and attracts all pull-back trajectories of $(\theta,\varphi)$, see Theorem \ref{thm1}. In fact, the conditions (H1) and (H2) given in Theorem \ref{thm1} are highly concise and readily verifiable, which can be applied to various stochastic systems, see Theorem \ref{thm2}-Theorem \ref{thm5} in Section 3. For simplicity, we only consider the application in the study of SDEs in Section 3. In fact, the method presented in this paper may be applied to investigate the dynamical behaviour of SFDEs, SPDEs and so on. We leave it as an open problem. For example, whether the conditions (H1) and (H2) are satisfied for nonlinear SFDEs under a one-sided Lipschitz condition, i.e.
\[\bigl\langle\xi(0)-\eta(0),F(\xi)-F(\eta)\bigr\rangle\leq-\lambda_1|\xi(0)-\eta(0)|^2+\lambda_2\int_{-\tau}^0|\xi(s)-\eta(s)|^2\mu(ds)\]
for $\xi,\eta\in C_\tau$, where $C_\tau:=C\bigl([-\tau,0],\mathbb{R}^n\bigr)$ denotes the Banach space of continuous functions, $F:C_\tau\rightarrow\mathbb{R}^n$, $\lambda_1>\lambda_2>0$ and $\mu$ is a probability measure on $[-\tau,0]$. Moreover, the synchronization and global stability for random mappings can also be studied. These important problems will be the subject of subsequent research.

\end{document}